\newtheorem{theorem}{Theorem}[section]
\newtheorem{proposition}[theorem]{Proposition}
\newtheorem{lemma}[theorem]{Lemma}
\newtheorem{corollary}[theorem]{Corollary}
\theoremstyle{definition}
\newtheorem{example}[theorem]{Example}
\newtheorem{definition}[theorem]{Definition}
\begin{document}

\author[Sh. Najafi]{Shahram Najafi}
\address{Department of Mathematics, Tarbiat Modares University, 14115-111 Tehran Jalal AleAhmad Nasr, Iran}
\author[A. Moussavi]{Ahmad Moussavi$^*$}
\address{Department of Mathematics, Tarbiat Modares University, 14115-111 Tehran Jalal AleAhmad Nasr, Iran}
\email{moussavi.a@modares.ac.ir; moussavi.a@gmail.com}
\author[P. Danchev]{Peter Danchev}
\address{Institute of Mathematics and Informatics, Bulgarian Academy of Sciences, 1113 Sofia, Bulgaria}
\email{danchev@math.bas.bg; pvdanchev@yahoo.com}

\thanks{$^*$Corresponding author: Ahmad Moussavi, email: moussavi.a@modares.ac.ir; moussavi.a@gmail.com}

\title[2-UQ rings]{Rings Whose Units Have Identity Plus Quasi-Nilpotent Square}
\keywords{Quasi-nilpotents, $UQ$ rings, $2$-$UQ$ rings, Group rings, Semi-potent rings}
\subjclass[2010]{16S34, 16U60, 20C07}

\begin{abstract}
In this paper, we investigate the structural and characterizing properties of the so-called {\it 2-UQ rings}, that are rings such that the square of every unit is the sum of an idempotent and a quasi-nilpotent element that commute with each other. We establish some fundamental connections between 2-UQ rings and relevant widely classes of rings including 2-UJ, 2-UU and tripotent rings. Our novel results include: (1) complete characterizations of 2-UQ group rings, showing that they force underlying groups to be either 2-groups or 3-groups when $3 \in J(R)$; (2) Morita context extensions preserving the 2-UQ property when trace ideals are nilpotent; and (3) the discovery that potent 2-UQ rings are precisely the semi-tripotent rings. Furthermore, we determine how the 2-UQ property interacts with the regularity, cleanness and potent conditions. Likewise, certain examples and counter-examples illuminate the boundaries between 2-UQ rings and their special relatives.

These achievements of ours somewhat substantially expand those obtained by Cui-Yin in Commun. Algebra (2020) and by Danchev {\it et al.} in J. Algebra \& Appl. (2025).
\end{abstract}

\maketitle

\section{Introduction}

In the present article, let $R$ be an associative ring with unity that is not necessarily commutative. For such a ring, we employ the following standard notations: we stand $U(R)$ for the set of invertible elements, $Nil(R)$ for the set of nilpotent elements, $C(R)$ for the set of central elements, $Id(R)$ for the set of idempotent elements, $QN(R)$ for the set of quasi-nilpotent elements, and $J(R)$ for the Jacobson radical. Also, the ring of $n \times n$ matrices over $R$ is denoted by $M_n(R)$, while the symbol $T_n(R)$ represents the ring of $n \times n$ upper triangular matrices over $R$. Following the classical terminology, a ring is called {\it abelian} if all its idempotents are central, that is, $Id(R) \subseteq C(R)$.

As usual, an element $a \in R$ is said to be {\it quasi-nilpotent} if $1-ax$ is invertible for each $x \in R$ satisfying $xa = ax$. Consequently, both $Nil(R)$ and $J(R)$ are obviously subsets of $QN(R)$. It should be emphasized that quasi-nilpotent elements play a critical role in analyzing the structure of Banach algebras $\mathcal{A}$. These elements have led to the introduction of several important concepts such as (strongly) J-clean rings \cite{chen2010}, nil-clean rings \cite{diesl2013}, generalized Drazin inverses \cite{koliha1996}, and quasi-polar rings \cite{chen2012}.

It is well known that $1+J(R) \subseteq U(R)$ for any ring $R$. Following \cite{D} or \cite{14}, $R$ is termed a {\it UJ-ring} whenever the equality $U(R)=1+J(R)$ holds. More generally, building upon \cite{cui}, in which work the idea mainly imitates that of \cite{D1}, we say that $R$ is a {\it 2-UJ ring} if, for every unit $u\in U(R)$, there exists $j\in J(R)$ such that $u^2=1+j$. This class of rings properly generalizes the class of UJ-rings. As demonstrated in \cite{cui}, for 2-UJ rings, the properties of being semi-regular, exchange and clean all do coincide.

In a related direction, mimicking \cite{12}, $R$ is named a {\it UU-ring} when $U(R)=1+Nil(R)$. Extending this notion, Sheibani and Chen \cite{10} defined {\it 2-UU rings} as those rings for which the square of each unit equals $1_R$ plus a nilpotent element. Their work confirmed that $R$ is strongly 2-nil-clean exactly when $R$ is an exchange 2-UU ring.

Let us now recall some fundamental concepts essential for our discussion: A ring \( R \) is called {\it Boolean} if all its elements are idempotents. In general, a ring \( R \) is called {\it tripotent} if every element \( a \in R \) satisfies the equality \( a^3 = a \); such elements are referred to as {\it tripotent elements}. In a more general setting, a ring \( R \) is said to be {\it regular} (resp., {\it unit-regular}) in the von-Neumann sense if, for each \( a \in R \), there exists \( x \in R \) (resp., \( x \in U(R) \)) such that \( a=axa \). Further, \( R \) is said to be {\it strongly regular} if, for all \( a \in R \), \( a = a^2b \) for some $b\in R$ depending on $a$. \\ In an other vein, a ring \( R \) is termed {\it exchange} if, for every \( a \in R \), there is an idempotent \( e \in aR \) with \( 1-e \in (1-a)R \). Besides, \( R \) is named {\it clean} if each its element can be expressed as the sum of an idempotent and a unit (see, for more details, \cite{nicc}). While every clean ring is exchange, the converse does {\it not} generally hold, though it is true for abelian rings (cf. \cite[Proposition 1.8]{nicc}). Additionally, \( R \) is called {\it semi-regular} if the quotient \( R/J(R) \) is regular and all idempotents lift modulo \( J(R) \). Semi-regular rings are known to be exchange, but the converse is however {\it not} always valid (see \cite{nicc}).

Next, Chen \cite{chen2010} introduced the class of strongly J-clean rings; in fact, an element is {\it strongly J-clean} if it is the sum of an idempotent and an element from the Jacobson radical that commute with each other, and a ring is {\it strongly J-clean} if all its elements are strongly J-clean or, equivalently, if $R$ is strongly clean and $R/J(R)$ is Boolean. In addition, it was shown in \cite[Theorem 4.1.9]{chenbook} that $R$ is a strongly J-clean ring if, and only if, $R$ is strongly clean and UJ.

Later, in 2025, Danchev {\it et al.} introduced the so-termed {\it UQ rings} and {\it strongly quasi-nil clean rings}, which are non-trivial extensions of {\it UJ rings} and {\it strongly J-clean rings}, respectively (see \cite{daoa}). In fact, a ring is called {\it UQ} if $U(R) = 1+ QN(R)$, and a ring $R$ is called {\it strongly quasi-nil clean} if every element of $R$ is the sum of an idempotent and a quasi-nilpotent element that commute with each other. They proved that $R$ is a {\it strongly quasi-nil clean ring} if, and only if, $R$ is strongly clean and UQ.

As a logical and non-trivial generalization of these concepts, we define the class of {\it 2-UQ rings} as follows: a ring $R$ is {\it 2-UQ} if, for every unit $u\in U(R)$, its square $u^2$ equals the sum of an idempotent and a quasinilpotent element that commute with each other (or, equivalently, $u^2=1+q$, where $q\in QN(R)$). Evidently, this class properly contains all {\it UQ} rings (and, consequently, all unit uniquely clean rings from \cite{13}) and rings with exactly two units. While all {\it 2-UJ} rings (and thus, particularly, all {\it UJ} rings) are {\it 2-UQ}, the converse is manifestly {\it not} true as we will demonstrate in the sequel.

Our goal here is to characterize these 2-UQ rings by analyzing their crucial properties in a close relation to {\it 2-UU} and {\it 2-UJ} rings, respectively, and to discover novel properties of the 2-UQ rings that are uncommon in current studies. Concretely, we succeeded to prove that, for potent rings, the 2-UQ property amounts to the 2-UJ property, and for semi-potent rings this holds modulo the Jacobson radical (see, for a more information, Theorems~\ref{semipotent} and \ref{exe 2}, respectively). About the inheritance of the 2-UQ property pertained to group rings, we successfully establish that any group ring equipped with that property obligates the full group to be either a 2-group or a 3-group, respectively, provided that either 2 lies in the Jacobson radical, or 3 lies in the Jacobson radical and the whole group is $p$-primary (see Proposition~\ref{group ring 2 in Jacobson} and Theorem~\ref{23}, respectively).

\medskip
\medskip

In order to illustrate all of what we said above, the next diagram naturally sheds some light on the transversal between the already discussed sorts of rings.

\medskip

\begin{center}

\tikzset{every picture/.style={line width=0.75pt}} 

\begin{tikzpicture}[x=0.75pt,y=0.75pt,yscale=-1,xscale=1]

\draw   (41,50) -- (101,50) -- (101,90) -- (41,90) -- cycle ;
\draw    (101.17,70.5) -- (137.5,70.26) ;
\draw [shift={(139.5,70.25)}, rotate = 179.63] [color={rgb, 255:red, 0; green, 0; blue, 0 }  ][line width=0.75]    (10.93,-3.29) .. controls (6.95,-1.4) and (3.31,-0.3) .. (0,0) .. controls (3.31,0.3) and (6.95,1.4) .. (10.93,3.29)   ;
\draw    (120.73,150.8) -- (169.29,168.48) ;
\draw [shift={(171.17,169.17)}, rotate = 200.01] [color={rgb, 255:red, 0; green, 0; blue, 0 }  ][line width=0.75]    (10.93,-3.29) .. controls (6.95,-1.4) and (3.31,-0.3) .. (0,0) .. controls (3.31,0.3) and (6.95,1.4) .. (10.93,3.29)   ;
\draw    (240.5,70.25) -- (201.83,70.17) ;
\draw [shift={(199.83,70.17)}, rotate = 0.12] [color={rgb, 255:red, 0; green, 0; blue, 0 }  ][line width=0.75]    (10.93,-3.29) .. controls (6.95,-1.4) and (3.31,-0.3) .. (0,0) .. controls (3.31,0.3) and (6.95,1.4) .. (10.93,3.29)   ;
\draw    (270,89.75) -- (225.34,108.96) ;
\draw [shift={(223.5,109.75)}, rotate = 336.73] [color={rgb, 255:red, 0; green, 0; blue, 0 }  ][line width=0.75]    (10.93,-3.29) .. controls (6.95,-1.4) and (3.31,-0.3) .. (0,0) .. controls (3.31,0.3) and (6.95,1.4) .. (10.93,3.29)   ;
\draw    (220,150.25) -- (173.03,168.44) ;
\draw [shift={(171.17,169.17)}, rotate = 338.83] [color={rgb, 255:red, 0; green, 0; blue, 0 }  ][line width=0.75]    (10.93,-3.29) .. controls (6.95,-1.4) and (3.31,-0.3) .. (0,0) .. controls (3.31,0.3) and (6.95,1.4) .. (10.93,3.29)   ;
\draw    (70.67,89.83) -- (117.16,109.47) ;
\draw [shift={(119,110.25)}, rotate = 202.9] [color={rgb, 255:red, 0; green, 0; blue, 0 }  ][line width=0.75]    (10.93,-3.29) .. controls (6.95,-1.4) and (3.31,-0.3) .. (0,0) .. controls (3.31,0.3) and (6.95,1.4) .. (10.93,3.29)   ;
\draw    (170.17,89.83) -- (171.14,167.17) ;
\draw [shift={(171.17,169.17)}, rotate = 269.28] [color={rgb, 255:red, 0; green, 0; blue, 0 }  ][line width=0.75]    (10.93,-3.29) .. controls (6.95,-1.4) and (3.31,-0.3) .. (0,0) .. controls (3.31,0.3) and (6.95,1.4) .. (10.93,3.29)   ;
\draw   (141,170) -- (201,170) -- (201,210) -- (141,210) -- cycle ;
\draw   (90,110.5) -- (150,110.5) -- (150,150.5) -- (90,150.5) -- cycle ;
\draw   (190.5,110) -- (250.5,110) -- (250.5,150) -- (190.5,150) -- cycle ;
\draw   (241,50) -- (301,50) -- (301,90) -- (241,90) -- cycle ;
\draw   (140,50) -- (200,50) -- (200,90) -- (140,90) -- cycle ;

\draw (223.5,132) node   [align=left] {\begin{minipage}[lt]{31.28pt}\setlength\topsep{0pt}
2-UJ
\end{minipage}};
\draw (171.33,193.42) node   [align=left] {\begin{minipage}[lt]{29.47pt}\setlength\topsep{0pt}
2-UQ
\end{minipage}};
\draw (120,132.5) node   [align=left] {\begin{minipage}[lt]{28.79pt}\setlength\topsep{0pt}
2-UU
\end{minipage}};
\draw (172,69) node   [align=left] {\begin{minipage}[lt]{22.33pt}\setlength\topsep{0pt}
UQ
\end{minipage}};
\draw (273,69) node   [align=left] {\begin{minipage}[lt]{21.08pt}\setlength\topsep{0pt}
UJ
\end{minipage}};
\draw (75,70) node   [align=left] {\begin{minipage}[lt]{25.61pt}\setlength\topsep{0pt}
UU
\end{minipage}};

\end{tikzpicture}

\end{center}

\medskip
\medskip

\section{Examples and Basic Properties of 2-UQ Rings}

In the current section, we introduce the concept of 2-UQ rings and examine their elementary but useful properties. Our basic tool is the following.

\begin{definition}
We say that a ring $R$ is {\it 2-UQ} if the square of each unit is a sum of an idempotent and a quasi-nilpotent that commute with each other (equivalently, for each $u\in U(R)$, $u^2=1+q$, where $q\in QN(R)$).	
\end{definition}

The next constructions are worthwhile.

\begin{example}\label{con xemple}
(1) Every 2-UJ ring is a 2-UQ ring since $J(R) \subseteq QN(R)$. However, the converse does {\it not} necessarily hold: consider $A = \mathbb{F}_3\langle x,y\rangle$ with $x^2=0$. With the aid of \cite[Lemma 3.18]{ns}, we have:
\[ U(A) = \{\mu + \mathbb{F}_3x + xAx : \mu \in U(F_3)\}, \]
\[ Nil(A) = \{\mathbb{F}_3x + xAx\} \subseteq QN(A). \]
It also can easily be shown that $J(A) = (0)$. To see this, take $a \in J(A)$ such that $1 + ya \notin U(A)$, whence we really must have $J(A) = (0)$, as asked.

Now, we show that $A$ is a 2-UQ ring, but {\it not} 2-UJ. To that goal, let $u := \mu + \alpha x + x\beta x \in U(A)$, where $\alpha \in \mathbb{F}_3$ and $\beta \in A$. Then, one readily inspects that
\[ u^2 = \mu^2 + 2\mu(\alpha x + x\beta x) \in 1 + Nil(A) \subseteq 1 + QN(R), \]
so $A$ is, indeed, a 2-UQ ring. However, it is clear that $A$ is {\it not} 2-UJ, because $1 + x \in U(A)$, but $(1 + x)^2 = 1 - x + x^2 \notin 1 + J(A)$, as required.

(2) Every 2-UU ring is a 2-UQ ring since $Nil(R) \subseteq QN(R)$. However, the converse does {\it not} necessarily be fulfilled: consider $B = \mathbb{F}_2[[x]]$. Looking at Corollary \ref{cor five}, $B$ has to be a 2-UQ ring, but for $1 + x \in U(B)$ one checks that $(1 + x)^2 = 1 + x^2 \notin 1 + Nil(B)$. Therefore, $B$ is {\it not} a 2-UU ring, as pursued.
\end{example}

We say that $S$ is a {\it good subring} of $R$, provided $U(R) \cap S \subseteq U(S)$, which is equivalent to the inclusion map $S \to R$ being a local homomorphism. If $S$ is a good subring of $R$, then we clearly have $QN(R) \cap S \subseteq QN(S)$.

For instance, it is easy to see that $R$ is a good subring of both $R[x]$ and $R[[x]]$. However, $R[x]$ is definitely {\it not} a good subring of $R[[x]]$, because $1+x \in U(R[[x]]) \cap R[x]$ but $1+x \notin U(R[x])$.

\medskip

We continue our further work with a series of preliminary technicalities.

\begin{lemma} \label{subring}
Suppose $S$ is a good subring of a ring $R$. If $R$ is 2-UQ, then $S$ inherits this property as well.
\end{lemma}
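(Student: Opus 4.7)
The plan is to transport the 2-UQ decomposition of a unit back from $R$ to $S$, using only the formal inclusions that come out of $S$ being a good subring. The only real ingredient is the inclusion $QN(R)\cap S\subseteq QN(S)$ mentioned in the paragraph immediately before the lemma, so the argument should reduce to a short chain of set-theoretic containments.

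First I would take an arbitrary $u\in U(S)$ and note that, because $S$ shares its identity with $R$, we automatically have $u\in U(R)$. Applying the 2-UQ hypothesis on $R$, there is a quasi-nilpotent $q\in QN(R)$ with $u^2=1+q$. Solving for $q$ gives $q=u^2-1$, which lies in $S$; hence $q\in QN(R)\cap S$.

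Next I would invoke the inclusion $QN(R)\cap S\subseteq QN(S)$ flagged in the remark preceding the lemma to conclude $q\in QN(S)$. For completeness, this inclusion itself follows immediately from the definition of a good subring: for any $x\in S$ commuting with $q$, one has $x\in R$ commuting with $q$, so $1-qx\in U(R)$ by quasi-nilpotence in $R$; but $1-qx\in S$, and goodness of the inclusion gives $1-qx\in U(R)\cap S\subseteq U(S)$. Thus the relation $u^2=1+q$ realises the 2-UQ decomposition of $u$ inside $S$ (the complementary idempotent being $1$, which commutes with everything), so $S$ is 2-UQ.

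There is essentially no obstacle here beyond making the chain $U(S)\subseteq U(R)$ and $QN(R)\cap S\subseteq QN(S)$ explicit; the only point one needs to be careful about is that the quasi-nilpotent witness $q=u^2-1$ automatically lands in $S$, so no new element needs to be constructed, and the commutativity condition in the 2-UQ definition is vacuous because the idempotent in the decomposition is $1$.
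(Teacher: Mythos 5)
Your proof is correct and follows essentially the same route as the paper: pass $u\in U(S)$ into $U(R)$, apply the 2-UQ property there, and observe that the witness $u^2-1$ lies in $QN(R)\cap S\subseteq QN(S)$. (If anything, your version is cleaner than the paper's, whose proof as printed contains the slip of writing $u-1$ and $U(R)=1+QN(R)$ instead of working with $u^2-1$.)
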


\begin{proof}
For any unit $u \in U(S) \subseteq U(R) = 1 + QN(R)$, we have $u - 1 \in QN(R) \cap S \subseteq QN(S)$, as requested.
\end{proof}

\begin{lemma} \label{prod}
Given a family of rings $(R_i)_{i\in I}$ indexed by set $I$, the quasi-nilpotent elements of their direct product satisfy the equality
$$QN\left(\prod_{i \in I} R_i\right) = \prod_{i \in I} QN(R_i).$$
\end{lemma}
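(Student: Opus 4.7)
The plan is to prove the equality by double inclusion, exploiting the fact that arithmetic operations, commutation and invertibility in a direct product of rings are all componentwise.

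For the inclusion $\prod_{i\in I} QN(R_i) \subseteq QN\bigl(\prod_{i \in I} R_i\bigr)$, I would take $a = (a_i)$ with each $a_i \in QN(R_i)$, and pick any $x = (x_i)$ in the product satisfying $ax = xa$. Componentwise this reads $a_i x_i = x_i a_i$ for every $i$, so by quasi-nilpotence of each $a_i$ the element $1 - a_i x_i$ is invertible in $R_i$. Since a tuple is invertible in a direct product exactly when every component is, $1 - ax = (1 - a_i x_i)_i$ is a unit in $\prod_{i \in I} R_i$, which shows that $a$ is quasi-nilpotent there.

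For the reverse inclusion $QN\bigl(\prod_{i \in I} R_i\bigr) \subseteq \prod_{i\in I} QN(R_i)$, I would take $a = (a_i) \in QN\bigl(\prod_{i \in I} R_i\bigr)$, fix an index $j \in I$, and consider an arbitrary $y \in R_j$ with $a_j y = y a_j$. The trick is to lift $y$ to a tuple $x = (x_i)$ by setting $x_j = y$ and $x_i = 0$ for $i \ne j$; this $x$ commutes with $a$ because zero commutes with everything. Quasi-nilpotence of $a$ in the product gives invertibility of $1 - ax$, and by componentwise invertibility its $j$-th component $1 - a_j y$ must be a unit in $R_j$. Since $y$ was arbitrary, $a_j \in QN(R_j)$, and since $j$ was arbitrary, $a \in \prod_{i\in I} QN(R_i)$.

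There is no real obstacle here: once the componentwise characterisation of units in a direct product is in hand, both directions fall out directly from the definition of quasi-nilpotence. The only mildly subtle point is the choice in the reverse direction to pad the test element $y$ with zeros in the other coordinates so that the commutativity hypothesis is automatically satisfied.
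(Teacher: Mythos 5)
Your proposal is correct and follows essentially the same double-inclusion argument as the paper, resting on the componentwise characterisation of units and commutation in a direct product. The only cosmetic difference is that in the reverse inclusion you explicitly pad the test element with zeros at the other coordinates, whereas the paper tests with an arbitrary commuting tuple all at once; both amount to the same thing.
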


\begin{proof}
For $(a_i) \in QN(\prod R_i)$ with $a_ib_i = b_ia_i$, we have $(a_i)(b_i) = (b_i)(a_i)$, thus making $1 - (a_i)(b_i)$ invertible in $\prod R_i$. This implies that $1 - a_ib_i \in U(R_i)$, proving $a_i \in QN(R_i)$ for all $i$.

Conversely, for $(a_i) \in \prod QN(R_i)$ with $(a_i)(b_i) = (b_i)(a_i)$, we get $a_ib_i = b_ia_i$ and $1 - a_ib_i \in U(R_i)$, thus making $1 - (a_i)(b_i)$ invertible in $\prod R_i$. So, $(a_i) \in QN(\prod R_i)$, as needed.
\end{proof}

\begin{lemma}\label{product}
The direct product $\prod_{i\in I} R_i$ of rings $R_i$ is 2-UQ uniquely when each direct component ring $R_i$ is 2-UQ.
\end{lemma}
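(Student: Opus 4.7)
The plan is to reduce the statement to the componentwise description of quasi-nilpotent elements furnished by Lemma~\ref{prod}, together with the obvious componentwise description of units in a direct product, namely $U(\prod_{i\in I} R_i) = \prod_{i\in I} U(R_i)$.

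For the ``if'' direction, I would pick an arbitrary unit $u = (u_i)_{i\in I} \in U(\prod_{i\in I} R_i)$. Each $u_i$ lies in $U(R_i)$, so by hypothesis there exists $q_i \in QN(R_i)$ with $u_i^2 = 1 + q_i$. Squaring componentwise yields $u^2 = (u_i^2) = 1 + (q_i)$, and Lemma~\ref{prod} gives $(q_i) \in QN(\prod_{i\in I} R_i)$, whence $\prod_{i\in I} R_i$ is 2-UQ.

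For the ``only if'' direction, fix $j \in I$ and let $v \in U(R_j)$. Form the element $u = (u_i)_{i\in I} \in \prod_{i\in I} R_i$ with $u_j = v$ and $u_i = 1$ for $i \neq j$; this is plainly a unit in the product. By the 2-UQ hypothesis on $\prod_{i\in I} R_i$, there is $q = (q_i)_{i\in I} \in QN(\prod_{i\in I} R_i)$ with $u^2 = 1 + q$. Comparing coordinates gives $q_j = v^2 - 1$ (and $q_i = 0$ for $i \neq j$). Invoking Lemma~\ref{prod} once more, $q_j = v^2 - 1 \in QN(R_j)$, so $R_j$ is 2-UQ.

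There is no real obstacle here: both directions are essentially bookkeeping once Lemma~\ref{prod} is available. The only mild subtlety is that in the ``only if'' direction one cannot simply invoke Lemma~\ref{subring} (since $R_j$ does not embed as a unital subring of the product in a way that lets one recover an arbitrary unit), which is why one uses the explicit lift $v \mapsto (\ldots,1,v,1,\ldots)$ instead.
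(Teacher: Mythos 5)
Your proposal is correct and follows essentially the same route as the paper: the paper's proof simply observes that units of the product are tuples of units and invokes Lemma~\ref{prod} to get $QN(\prod R_i)=\prod QN(R_i)$, which is exactly the bookkeeping you carry out in detail (including the standard lift $v\mapsto(\ldots,1,v,1,\ldots)$ for the converse direction).
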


\begin{proof}
Since units in $\prod R_i$ are actually the tuples of units in $R_i$, we use Lemma \ref{prod} to obtain $QN(\prod R_i) = \prod QN(R_i)$, establishing the result.
\end{proof}

For a ring $D$ with unital subring $C$, we define the tail ring extension $\mathcal{R}[D,C]$ thus:
$$\{(d_1,...,d_n,c,c,...) | d_i \in D, c \in C, 1\leq i\leq n\},$$
which forms a ring under the traditional component-wise operations.

\begin{corollary}
$\mathcal{R}[D,C]$ is 2-UQ if and only if both $D$ and $C$ are 2-UQ rings.
\end{corollary}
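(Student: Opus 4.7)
The plan is to follow the scheme of Lemma~\ref{product}, adapted to the fact that $\mathcal{R}[D,C]$ is not an honest direct product but a subring of $\prod_{i\in\mathbb{N}} D$ whose entries eventually stabilize in $C$. First I would record the description of the units, namely that $(d_1,\dots,d_n,c,c,\dots)\in U(\mathcal{R}[D,C])$ if and only if each $d_i\in U(D)$ and $c\in U(C)$; this is immediate from componentwise inversion together with the requirement that the inverse also have its tail in $C$.

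The main technical step is a tail-analogue of Lemma~\ref{prod}:
\[
(a_1,\dots,a_n,a,a,\dots)\in QN(\mathcal{R}[D,C]) \iff a_i\in QN(D) \text{ for all } i \text{ and } a\in QN(C).
\]
The ``$\Leftarrow$'' direction copies the product-case argument verbatim, testing against an arbitrary commuting element $(b_1,\dots,b_m,b_\infty,b_\infty,\dots)$ (one may pad so that $m=n$) and extracting componentwise invertibility of $1-a_ib_i$ in $D$ and of $1-ab_\infty$ in $C$, which assemble to an inverse lying in $\mathcal{R}[D,C]$. For ``$\Rightarrow$'', given any $b\in D$ commuting with a fixed $a_i$, I would test against the element having $b$ in slot $i$ and zeros elsewhere (this does belong to $\mathcal{R}[D,C]$ since $0\in C$), obtaining $1-a_ib\in U(D)$; and for the tail, given $b\in C$ commuting with $a$, I would test against the constant tuple $(b,b,b,\dots)\in\mathcal{R}[D,C]$ to obtain $1-ab\in U(C)$.

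With this characterization in hand, both implications of the corollary are short. For the backward direction, a unit $u=(d_1,\dots,d_n,c,c,\dots)\in U(\mathcal{R}[D,C])$ squares to
\[
u^2 = 1 + (d_1^2-1,\dots,d_n^2-1,\,c^2-1,\,c^2-1,\dots),
\]
and if $D$ and $C$ are 2-UQ then each entry of the defect lies in $QN(D)$ or $QN(C)$ respectively, so the tail-characterization places the defect in $QN(\mathcal{R}[D,C])$. For the forward direction, given $u\in U(D)$ I plug it into the first slot, form $(u,1_C,1_C,\dots)\in U(\mathcal{R}[D,C])$, square it, and read off that the first coordinate $u^2-1$ of the resulting quasi-nilpotent defect lies in $QN(D)$; given $v\in U(C)$ I use the constant tuple $(v,v,v,\dots)\in U(\mathcal{R}[D,C])$ and read off the tail coordinate $v^2-1\in QN(C)$.

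The only real obstacle is the ``$\Rightarrow$'' part of the tail-analogue, since $\mathcal{R}[D,C]$ is \emph{not} a good subring of $\prod D$ (indeed, the excerpt already warns of this phenomenon with the $R[x]$ versus $R[[x]]$ comparison), so one cannot simply quote Lemma~\ref{prod}. One must instead verify quasi-nilpotence component-by-component by exhibiting test elements that genuinely lie in $\mathcal{R}[D,C]$; the zero-padded and constant-tuple choices above accomplish this. Once the tail characterization of $QN$ is secured, the corollary itself is pure bookkeeping.
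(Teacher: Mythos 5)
Your overall strategy (describe the units, characterize $QN(\mathcal{R}[D,C])$ componentwise, then read off both implications) is the natural one, but the key intermediate claim is \emph{false as stated}, and the error sits exactly in the step you dismiss as ``copies the product-case argument verbatim.'' You assert that $(a_1,\dots,a_n,a,a,\dots)\in QN(\mathcal{R}[D,C])$ as soon as each $a_i\in QN(D)$ and $a\in QN(C)$. Test this against an element $(b_1,\dots,b_m,b_\infty,b_\infty,\dots)$ with $m>n$: at a position $i$ with $n<i\le m$ the candidate has already entered its constant tail, so its entry is $a$, while the test entry $b_i$ is an \emph{arbitrary} element of $D$ commuting with $a$ (zero tail, so this test element genuinely lies in $\mathcal{R}[D,C]$). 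Invertibility of $1-ab_i$ in $D$ then requires $a\in QN(D)$, which $a\in QN(C)$ does not give. Concretely, take $D=\mathbb{Q}$ and $C=\mathbb{Z}_{(p)}$: then $p\in J(C)\subseteq QN(C)$, yet the constant tuple $(p,p,\dots)$ is not quasi-nilpotent in $\mathcal{R}[D,C]$, since testing against $(1/p,0,0,\dots)$ produces a first coordinate $1-p\cdot\frac{1}{p}=0$. So the correct version of your tail-analogue of Lemma~\ref{prod} must require $a\in QN(D)\cap QN(C)$ (and note that neither membership implies the other in general, since $C$ need not be a good subring of $D$).

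The corollary itself survives, because in the only place you invoke the broken direction the tail entry is $c^2-1$ with $c\in U(C)\subseteq U(D)$; as $D$ is assumed 2-UQ, $c^2-1\in QN(D)$ automatically, and as $C$ is 2-UQ, $c^2-1\in QN(C)$, so the strengthened hypothesis is met. You should either restate the characterization with the extra condition $a\in QN(D)$, or bypass it and verify the defect of a unit directly, splitting the test positions into the three ranges (head, ``padded'' middle, true tail) as above. The rest of your argument --- the unit description, the zero-padded and constant test elements for the forward direction, and the use of $(u,1_C,1_C,\dots)$ --- is fine; the paper itself offers no proof, presumably intending the forward direction via Lemma~\ref{subring}/Lemma~\ref{corner} and the backward direction by exactly this kind of componentwise check.
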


\begin{lemma}\label{corner}
For any 2-UQ ring $R$ and any idempotent $e \in Id(R)$, the corner ring $eRe$ maintains the 2-UQ property.
\end{lemma}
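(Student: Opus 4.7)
My plan is the standard ``adjoin the complementary idempotent'' trick together with a Peirce decomposition argument to control quasi\nobreakdash-nilpotence inside the corner.

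First, take an arbitrary unit $v \in U(eRe)$, so $vv' = v'v = e$ for some $v' \in eRe$. Form the element $u := v + (1-e) \in R$. A direct check shows that $u$ is a unit of $R$ with inverse $v' + (1-e)$, and that $u^2 = v^2 + (1-e)$ (using $v(1-e) = (1-e)v = 0$). Since $R$ is 2-UQ, there exists $q \in QN(R)$ with $u^2 = 1 + q$, hence $v^2 = e + q$. Moreover, $q = v^2 - e \in eRe$, so $q$ is a candidate for the quasi\nobreakdash-nilpotent part inside $eRe$.

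The only real content is verifying that $q \in QN(eRe)$. For this, let $y \in eRe$ satisfy $yq = qy$. I want to conclude $e - yq \in U(eRe)$. Writing $yq \in eRe$, the key observation is
\[
1 - yq = (e - yq) + (1-e),
\]
where the two summands are orthogonal in the Peirce sense, i.e.\ $(e-yq)(1-e) = (1-e)(e-yq) = 0$, because $yq \in eRe$. Since $y$ and $q$ commute in $R$ as well, the quasi\nobreakdash-nilpotence of $q$ in $R$ gives $1 - yq \in U(R)$.

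The main obstacle is now to descend invertibility from $R$ to $eRe$. I will do this by writing the inverse $w$ of $1-yq$ in its Peirce decomposition $w = w_{11} + w_{12} + w_{21} + w_{22}$ relative to $e$ and $1-e$, and then comparing components on both sides of $(1-yq)w = w(1-yq) = 1$. The off\nobreakdash-diagonal pieces of $1 - yq$ vanish, so matching the $eRe$\nobreakdash-component gives $(e-yq)w_{11} = w_{11}(e-yq) = e$, i.e.\ $w_{11} \in eRe$ is a two\nobreakdash-sided inverse of $e - yq$ in $eRe$. Thus $e - yq \in U(eRe)$, proving $q \in QN(eRe)$, and the decomposition $v^2 = e + q$ witnesses that $eRe$ is 2-UQ.
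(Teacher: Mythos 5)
Your proof is correct and follows the same skeleton as the paper's: pass from $v \in U(eRe)$ to the unit $v + (1-e)$ of $R$, apply the 2-UQ hypothesis to get $v^2 - e \in QN(R) \cap eRe$, and then descend quasi-nilpotence to the corner. The only divergence is in that last step: the paper simply cites the inclusion $QN(R) \cap eRe \subseteq QN(eRe)$ from Ying--Chen (\cite[Lemma 3.5]{chen2012}), whereas you prove it from scratch via the Peirce decomposition of $(1-yq)^{-1}$, checking that its $eRe$-component is a two-sided inverse of $e - yq$ in $eRe$ because the off-diagonal Peirce components of $1-yq$ vanish. That argument is sound (the component-matching does force $w_{11}(e-yq) = (e-yq)w_{11} = e$), so your version buys a self-contained proof at the cost of a few extra lines; otherwise the two arguments are the same.
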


\begin{proof}
Let $u \in U(eRe)$. Then, $u+(1-e) \in U(R)$, because
$$(u+(1-e))(u^{-1}+(1-e))=1=(u^{-1}+(1-e))(u+(1-e)).$$
Consequently, $$u^2+(1-e)=(u+(1-e))^2 \in 1+QN(R),$$ yielding that $u^2-e \in QN(R) \cap eRe$.
However, according to \cite[Lemma 3.5]{chen2012}, we deduce $$QN(R) \cap eRe \subseteq QN(eRe),$$ and thus $u^2-e \in QN(eRe)$. Therefore, $eRe$ is a 2-UQ ring, as stated.
\end{proof}

\begin{lemma}\label{matrix}
For any non-zero ring $S$ and any integer $n \geq 2$, the matrix ring $M_n(S)$ fails to be a 2-UQ ring.
\end{lemma}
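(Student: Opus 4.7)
The plan is to reduce to the two-by-two case and then exhibit a single explicit unit whose square violates the $2$-UQ condition. First, observe that the matrix $e = \begin{pmatrix} 1 & 0 \\ 0 & 1 \end{pmatrix}$ sitting in the top left corner (that is, $e = E_{11}+E_{22}$ inside $M_n(S)$) is an idempotent with $eM_n(S)e \cong M_2(S)$. By the corner inheritance in Lemma \ref{corner}, if $M_n(S)$ were 2-UQ then $M_2(S)$ would be 2-UQ too. Hence it suffices to prove the statement for $n=2$.

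Next, I would write down the unit
\[
u \;=\; \begin{pmatrix} 1 & 1 \\ 1 & 0 \end{pmatrix} \;\in\; M_2(S),
\]
and verify that $u \in U(M_2(S))$ by exhibiting the two-sided inverse $\begin{pmatrix} 0 & 1 \\ 1 & -1 \end{pmatrix}$ explicitly. A direct computation then gives
\[
u^2 \;=\; \begin{pmatrix} 2 & 1 \\ 1 & 1 \end{pmatrix}, \qquad u^2 - I_2 \;=\; \begin{pmatrix} 1 & 1 \\ 1 & 0 \end{pmatrix} \;=\; u.
\]

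Finally, I would invoke the general (and almost trivial) fact that in any nonzero ring $R$, no unit can be quasi-nilpotent: if $v \in U(R)$, then $x=v^{-1}$ commutes with $v$, yet $1-vx = 0$ is not a unit in $R$. Applying this to the unit $u \in M_2(S)$ above shows $u^2 - I_2 = u \notin QN(M_2(S))$, so $u^2 \neq 1 + q$ for any $q \in QN(M_2(S))$. This contradicts the assumed 2-UQ property of $M_2(S)$, completing the argument.

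There is no real obstacle in this proof beyond picking the correct witness matrix; many obvious choices fail (e.g.\ the flip $\begin{pmatrix} 0 & 1 \\ 1 & 0 \end{pmatrix}$ gives $u^2 = I$, and the transvection $I + E_{12}$ gives $u^2 - I = 2E_{12}$, which is nilpotent). The unit $\begin{pmatrix} 1 & 1 \\ 1 & 0 \end{pmatrix}$ is well chosen because (a) it admits an inverse over $\mathbb Z$, hence over any coefficient ring $S$, requiring no division, and (b) the fortunate identity $u^2 - I = u$ reduces the verification that $u^2 - I$ is not quasi-nilpotent to the tautology that units are never quasi-nilpotent.
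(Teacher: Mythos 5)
Your proposal is correct and follows essentially the same route as the paper: reduce to $M_2(S)$ via the corner-ring lemma, take the unit $\begin{pmatrix} 1 & 1 \\ 1 & 0 \end{pmatrix}$, observe $u^2 - I_2 = u$, and conclude from the fact that a unit can never be quasi-nilpotent. The only difference is that you spell out the inverse matrix and the one-line proof that $U(R)\cap QN(R)=\emptyset$, details the paper leaves implicit.
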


\begin{proof}
We first observe that, when $n \geq 2$, $M_2(S)$ embeds as a corner subring of $M_n(S)$. Thus, it is sufficient to illustrate that $M_2(S)$ is {\it not} 2-UQ. To that end, consider the unit matrix
$$A = \begin{pmatrix} 1 & 1 \\ 1 & 0 \end{pmatrix} \in M_2(S).$$
A direct calculation enables that $A^2 - I_2 = A$, which would require $A$ to be simultaneously a unit and a quasinilpotent. However, one knows that $U(M_2(S)) \cap QN(M_2(S)) = \emptyset$, forcing the desired contradiction. Consequently, $M_n(S)$ cannot satisfy the 2-UQ property for any $n \geq 2$, as wanted.
\end{proof}

A collection $\{e_{ij} : 1 \leq i, j \leq n\}$ of non-zero elements in $R$ forms a system of $n^2$ matrix units if they satisfy the multiplication rule $e_{ij}e_{st} = \delta_{js}e_{it}$, where $\delta_{jj} = 1$ and $\delta_{js} = 0$ when $j \neq s$. In such cases, the element $e := \sum_{i=1}^n e_{ii}$ is an idempotent, and the corner ring $eRe$ is isomorphic to $M_n(S)$ for a ring $S$ defined by
$$S = \{r \in eRe : re_{ij} = e_{ij}r \text{ for all } 1 \leq i,j \leq n\}.$$

\begin{lemma} \label{dedkind finite}
All 2-UQ rings are Dedekind-finite.
\end{lemma}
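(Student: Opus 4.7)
The plan is to prove the contrapositive/standard form: if $R$ is $2$-UQ and $ab=1$ in $R$, then $ba=1$. I will use a classical Kaplansky-style matrix-unit construction to reduce a failure of Dedekind-finiteness to the presence of a $2\times 2$ matrix subring, which is ruled out by Lemmas~\ref{corner} and \ref{matrix} just proved.

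First, assume for contradiction that $ab=1$ but $e:=1-ba\neq 0$. I record the elementary identities that follow from $ab=1$, namely $e^{2}=e$, $ae=a-aba=0$, $eb=b-bab=0$, and (by induction on $n$) $a^{n}b^{n}=1$. Next, for $i,j\in\{1,2\}$ I define
\[
e_{ij}\;:=\;b^{i-1}\,e\,a^{j-1}.
\]
Using the identities above I would verify that $e_{ij}e_{kl}=\delta_{jk}\,e_{il}$: when $j=k$ the middle factor $a^{j-1}b^{k-1}$ collapses to $1$, and when $j\neq k$ it collapses to a nontrivial power of $a$ or $b$, which is then killed on the appropriate side by $ae=0$ or $eb=0$. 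I would also check that each $e_{ij}$ is nonzero, since $a^{i-1}e_{ij}b^{j-1}=e\neq 0$ by the assumption.

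Having produced a genuine system of $2^{2}$ matrix units in $R$, I set $f:=e_{11}+e_{22}$, an idempotent of $R$. By the construction recalled just before the statement of Lemma~\ref{dedkind finite}, the corner ring $fRf$ is isomorphic to $M_{2}(S)$ for the ring $S=\{r\in fRf:re_{ij}=e_{ij}r \text{ for all } i,j\}$, and $S$ is nonzero because its identity element is $f\neq 0$. On the one hand, by Lemma~\ref{corner}, the corner $fRf$ inherits the $2$-UQ property from $R$; on the other hand, by Lemma~\ref{matrix}, the ring $M_{2}(S)$ is not $2$-UQ for any nonzero $S$. This contradiction forces $1-ba=0$, i.e., $ba=1$, so $R$ is Dedekind-finite.

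The only non-routine step is the verification that the $e_{ij}$ form matrix units and are all nonzero; once this classical computation is in place the conclusion follows immediately from the two preceding lemmas. I anticipate no further obstacle, since the corner/matrix-ring obstruction built in Lemmas~\ref{corner} and \ref{matrix} was designed precisely to block this kind of pathology.
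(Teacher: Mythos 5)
Your proof is correct and follows essentially the same route as the paper: build a system of $2\times 2$ matrix units from a pair with $ab=1$, $ba\neq 1$, and invoke Lemmas~\ref{corner} and \ref{matrix} on the resulting corner $fRf\cong M_2(S)$. In fact your formula $e_{ij}=b^{i-1}(1-ba)a^{j-1}$ is the correct normalization (the paper's displayed $e_{ij}=a^i(1-ba)b^j$ is garbled, since $a(1-ba)=0$), and your verification of the matrix-unit relations and of $f\neq 0$ fills in the details the paper omits.
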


\begin{proof}
Assume the contrary that $R$ is {\it not} Dedekind-finite. Then, there exist $a,b \in R$ with $ab = 1$ but $ba \neq 1$. Constructing matrix units $e_{ij} = a^i(1-ba)b^j$ and setting $e := \sum_{i=1}^n e_{ii}$, we derive $eRe \cong M_n(S)$ for some non-zero ring $S$. Since Proposition \ref{corner} guarantees that $eRe$ inherits the 2-UQ property, this would imply the same for $M_n(S)$, thus contradicting Lemma \ref{matrix}, as expected.
\end{proof}

The following statement describes division 2-UQ rings completely.

\begin{lemma}\label{division}
Let $R$ be a division ring. Then, $R$ is 2-UQ if, and only if, either $R\cong\mathbb{F}_2$ or $R\cong\mathbb{F}_3$.
\end{lemma}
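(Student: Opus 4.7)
The plan is to exploit the fact that in a division ring the quasi-nilpotent elements collapse to zero, which turns the 2-UQ condition into the single equation $u^2 = 1$ for all units.

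First I would verify that for a division ring $R$ one has $QN(R) = \{0\}$. Indeed, if $0 \ne q \in QN(R)$, then $q$ is a unit, and its inverse $q^{-1}$ commutes with $q$; but $1 - q \cdot q^{-1} = 0$ is not invertible, contradicting the definition of quasi-nilpotence. Hence the only quasi-nilpotent element is $0$.

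Next, assuming $R$ is a 2-UQ division ring, the defining condition forces $u^2 - 1 \in QN(R) = \{0\}$ for every $u \in U(R) = R \setminus \{0\}$. So every nonzero $u$ satisfies $(u-1)(u+1) = 0$, and since $R$ is a domain, $u \in \{1,-1\}$. Therefore $R \setminus \{0\} \subseteq \{1,-1\}$. If $\mathrm{char}(R) = 2$ then $1 = -1$ and $R = \{0,1\} \cong \mathbb{F}_2$; otherwise $R = \{0,1,-1\} \cong \mathbb{F}_3$.

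For the converse I would just note that $\mathbb{F}_2$ and $\mathbb{F}_3$ are trivially 2-UQ: their unit groups have order $1$ and $2$ respectively, and in both cases $u^2 = 1 = 1 + 0$, with $0 \in QN(R)$. There is no real obstacle here; the only point requiring a one-line justification is the observation $QN(R) = \{0\}$ in a division ring, which the rest of the argument rides on.
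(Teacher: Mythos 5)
Your proposal is correct, and the forward direction takes a genuinely different (and more elementary) route than the paper. Both arguments share the same opening move: in a division ring no unit can be quasi-nilpotent (your computation $1-q\cdot q^{-1}=0$ is exactly the disjointness $U(R)\cap QN(R)=\emptyset$ used in the paper), so $QN(R)=(0)$ and the 2-UQ condition collapses to $u^2=1$ for every $u\neq 0$. From there the paper deduces $a^3=a$ for all $a$, invokes Jacobson's commutativity theorem to conclude $R$ is a field, and then counts roots of $1-x^2$ to bound $|R^{\ast}|\le 2$. You instead factor $u^2-1=(u-1)(u+1)$ directly -- a valid identity even in a noncommutative ring, since $u$ commutes with itself and with $1$ -- and use the absence of zero divisors to get $R\setminus\{0\}\subseteq\{1,-1\}$ at once, so that $R$ has at most three elements and must be $\mathbb{F}_2$ or $\mathbb{F}_3$ according to whether $1=-1$. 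Your version avoids the appeal to Jacobson's theorem entirely and obtains commutativity for free from the size of the ring, which makes it shorter and self-contained; the paper's version illustrates the standard $a^3=a$ technique but uses strictly heavier machinery for the same conclusion. One small point worth making explicit in the characteristic $\neq 2$ case: since $1+1$ must lie in $\{0,1,-1\}$ and cannot equal $0$ or $1$, you get $1+1=-1$, i.e.\ $3=0$, which pins down $R\cong\mathbb{F}_3$ rather than merely a three-element set.
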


\begin{proof}
Since $R$ is a division ring and $U(R) \cap QN(R) = \emptyset$, it must be that $QN(R) = (0)$. Therefore, for every $0 \neq a \in R = U(R)$, we have $a^2 = 1$, thus assuring $a^3 = a$. Utilization of the classical Jacobson's theorem (cf. \cite{lamex}) gives at once that $R$ is commutative.

Set $f(x):=1-x^2 \in R[x]$. Since $R$ is a field, the polynomial $f(x)$ has at most $2$ roots in $R^{\ast}$. So, if we suppose $A$ to be the set of all roots of $f$ in $R^{\ast}$, since \( R \) is both a 2-UQ ring and a field, it follows that, for every non-zero element \( a \in R \), \( a^2 = 1 \), thus ensuring \( R^* = A \). Therefore, $|R^{\ast}|=|A|\le 2$. So, $R$ is one of the finite fields $\mathbb{F}_2$ or $\mathbb{F}_3$. The converse being apparent, we are done.
\end{proof}

Let $R$ be a ring, and $M$ a bi-module over $R$. The trivial extension of $R$ and $M$ is putted as
\[ T(R, M) = \{(r, m) : r \in R \text{ and } m \in M\}, \]
with addition defined component-wise and multiplication defined by
\[ (r, m)(s, n) = (rs, rn + ms). \]
Note that the trivial extension $T(R, M)$ is isomorphic to the subring
\[ \left\{ \begin{pmatrix} r & m \\ 0 & r \end{pmatrix} : r \in R \text{ and } m \in M \right\} \]
of the formal $2 \times 2$ matrix ring $\begin{pmatrix} R & M \\ 0 & R \end{pmatrix}$, as well as $$T(R, R) \cong R[x]/\left\langle x^2 \right\rangle.$$ We, likewise, note that the set of units of the trivial extension $T(R, M)$ equals to \[ U(T(R, M)) = T(U(R), M). \]

The following claim characterizes quasi-nilpotent elements in various ring buildings.

\begin{lemma}\cite[Lemma 2.8]{daoa}\label{basic property}
For rings $R,S$, an $(R,S)$-bi-module $N$ and an $R$-bi-module $M$, the following containments hold:

(1) $\{(r,m) \in T(R,M) \mid r \in QN(R), m \in M\} \subseteq QN(T(R,M))$.

(2) $\left\{\begin{pmatrix} r & m \\ 0 & s \end{pmatrix} \mid r \in QN(R), s \in QN(S), m \in N\right\} \subseteq QN\left(\begin{pmatrix} R & N \\ 0 & S \end{pmatrix}\right)$.

(3) $\{(a_{ij}) \in T_n(R) \mid a_{ii} \in QN(R) \text{ for } 1 \leq i \leq n\} \subseteq QN(T_n(R))$.

(4) $\{a_0 + \cdots + a_{n-1}x^{n-1} \in R[x]/\langle x^n \rangle \mid a_0 \in QN(R)\} \subseteq QN(R[x]/\langle x^n \rangle)$.

(5) $\{a_0 + a_1x + a_2x^2 + \cdots \in R[[x]] \mid a_0 \in QN(R)\} \subseteq QN(R[[x]])$

Moreover, these inclusions become equalities when $R$ and $S$ are UQ rings.
\end{lemma}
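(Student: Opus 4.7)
The plan is to handle parts (1) through (5) uniformly by exploiting the standard characterization of units in each of the five ambient constructions, and then to deduce the ``moreover'' clause from a simple reformulation of quasi-nilpotency in UQ rings.

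For the five inclusions, the key observation is that in each ambient ring an element is a unit precisely when its ``diagonal part''---the single entry $r$ in $T(R,M)$, the pair $(r,s)$ in the $2\times 2$ formal triangular matrix, the diagonal entries in $T_n(R)$, or the constant term in $R[x]/\langle x^n\rangle$ and $R[[x]]$---is a unit in the appropriate base ring. Taking an element $\alpha$ whose diagonal part lies in the base $QN$, I pick any $\beta$ in the ambient ring commuting with $\alpha$; reading off the diagonal from $\alpha\beta=\beta\alpha$ produces diagonal entries that commute entry-wise in the base ring. Quasi-nilpotency of those diagonal entries then makes the diagonal of $1-\alpha\beta$ consist of units of the base, and the unit characterization above upgrades this to $1-\alpha\beta \in U$ of the ambient ring. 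This proves (1)--(5) simultaneously; the only subtlety is checking that ambient-ring commutation of $\alpha$ and $\beta$ really forces base-ring commutation of their diagonal parts, which is immediate from the product formulas in each case.

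For the ``moreover'' clause I would use the observation that whenever a ring $A$ is UQ one has $QN(A)=U(A)-1$: indeed, $U(A)=1+QN(A)$ gives $U(A)-1 \subseteq QN(A)$, while $1+QN(A) \subseteq U(A)$ always holds and yields the reverse. The strategy is first to show that each ambient ring in (1)--(5) is itself UQ whenever the underlying base rings are UQ. For example, in (5) a unit $f \in R[[x]]$ has constant term $1+q$ with $q\in QN(R)$ by UQ-ness of $R$, so $f-1$ has constant term $q\in QN(R)$ and hence lies in $QN(R[[x]])$ by the already-established inclusion in part (5); consequently $R[[x]]$ is UQ. The same bootstrap works for the other four constructions (for (2) one uses both $R$ and $S$ being UQ). Once the ambient ring is known to be UQ, the reverse containment becomes automatic: for $\alpha$ in the ambient ring,
$$\alpha \in QN \;\Longleftrightarrow\; 1+\alpha \in U \;\Longleftrightarrow\; \mathrm{diag}(\alpha)+1 \in U(\text{base}) \;\Longleftrightarrow\; \mathrm{diag}(\alpha)\in QN(\text{base}),$$
which is precisely the desired reverse inclusion.

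The main obstacle I anticipate lies in this reverse direction: a naive attempt to test quasi-nilpotency of an arbitrary ambient $\alpha$ against every commuting $\beta$ is awkward, because off-diagonal components of $\beta$ can interact with the off-diagonal components of $\alpha$ via cross-terms that are not easy to control. The UQ-trick bypasses this entirely by converting a universal quantifier over commuting $\beta$ into the single unit test $1+\alpha \in U$, at the cost of first verifying UQ-ness of the ambient ring---which is cheap once the inclusion half of the lemma is in hand.
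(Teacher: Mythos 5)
This lemma is quoted in the paper from \cite[Lemma 2.8]{daoa} without an accompanying proof, so there is nothing internal to compare against; judged on its own, your argument is correct and complete. The two pillars you rely on are both valid: in each of the five constructions an element is a unit exactly when its diagonal part (respectively, constant term) is a unit in the base ring(s), and the diagonal of a product is the entrywise product of the diagonals, so ambient commutation $\alpha\beta=\beta\alpha$ does force $\mathrm{diag}(\alpha)$ and $\mathrm{diag}(\beta)$ to commute in the base. That gives the five inclusions in one stroke, and your bootstrap (inclusion $\Rightarrow$ ambient ring is UQ $\Rightarrow$ reverse inclusion) handles the ``moreover'' clause. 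One small remark: the detour through UQ-ness of the ambient ring is not actually needed for the reverse inclusion. If $\alpha\in QN(A)$ then $1+\alpha\in U(A)$ holds in \emph{any} ring (test quasi-nilpotency against $x=-1$), hence $1+\mathrm{diag}(\alpha)\in U(\mathrm{base})$, and UQ-ness of the base alone gives $\mathrm{diag}(\alpha)\in QN(\mathrm{base})$. So only the forward implication of your displayed chain of equivalences is used, and the first step of your two-step strategy can be dropped; it is a harmless redundancy, not an error.
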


We now immediately arrive at the following characterization of 2-UQ rings in various extensions:

\begin{corollary} \label{cor five}
Let $R$ and $S$ be rings, $N$ an $(R,S)$-bi-module and $M$ an $R$-bi-module. Then, the following equivalencies hold:

(1) The trivial extension $T(R,M)$ is 2-UQ if, and only if, $R$ is 2-UQ.

(2) The formal triangular matrix ring $\begin{pmatrix} R & N \\ 0 & S \end{pmatrix}$ is 2-UQ if, and only if, both $R$ and $S$ are 2-UQ.

(3) For any $n \geq 1$, the triangular matrix ring $T_n(R)$ is 2-UQ if, and only if, $R$ is 2-UQ.

(4) For any $n \geq 1$, the quotient ring $R[x]/\langle x^n \rangle$ is 2-UQ if, and only if, $R$ is 2-UQ.

(5) The power series ring $R[[x]]$ is 2-UQ if, and only if, $R$ is 2-UQ.
\end{corollary}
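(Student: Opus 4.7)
The plan is to handle each equivalence in (1)--(5) by verifying the two implications separately, using a uniform strategy across all five parts. The forward direction (``extension is 2-UQ implies the base ring(s) is/are 2-UQ'') in each part is obtained by realizing the base ring(s) inside the extension either as a good subring, via Lemma \ref{subring}, or as a corner ring via an idempotent, via Lemma \ref{corner}. The backward direction (``base rings are 2-UQ implies the extension is 2-UQ'') in each part is obtained by computing the square of an arbitrary unit of the extension and recognizing $u^2-1$ as a quasi-nilpotent via the appropriate clause of Lemma \ref{basic property}.

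For the forward direction, I would proceed as follows. In (1), the map $r\mapsto (r,0)$ identifies $R$ with a good subring of $T(R,M)$, since $(r,0)\in U(T(R,M))$ forces $r\in U(R)$. In (2), the orthogonal idempotents $\begin{pmatrix}1&0\\0&0\end{pmatrix}$ and $\begin{pmatrix}0&0\\0&1\end{pmatrix}$ cut out corner rings isomorphic to $R$ and $S$, respectively. In (3), each diagonal matrix unit $e_{ii}$ of $T_n(R)$ produces a corner ring isomorphic to $R$. In (4) and (5), the subring of constants identifies $R$ as a good subring, since an element is a unit in the extension precisely when its constant term is a unit in $R$. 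Each of these reductions is immediate from the hypothesis that the ambient extension is 2-UQ.

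For the backward direction, I would take an arbitrary unit $u$ of the extension and expand it in its natural block or series form: $u=(r,m)$ in (1); $u=\begin{pmatrix}a & n \\ 0 & b\end{pmatrix}$ with $a\in U(R)$ and $b\in U(S)$ in (2); an upper triangular matrix with units of $R$ on the diagonal in (3); and a polynomial or power series with unit constant term in (4) and (5). Since the hypothesis supplies $a^2=1+q$ with $q\in QN$ of the appropriate base ring for each diagonal or constant entry, a direct computation of $u^2$ reveals that $u^2-1$ has precisely the form specified in the corresponding clause of Lemma \ref{basic property}: its ``diagonal'' part is quasi-nilpotent in the base ring(s), and the remaining off-diagonal or higher-degree entries are arbitrary. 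Lemma \ref{basic property} then certifies that $u^2-1$ is quasi-nilpotent in the extension, as required.

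The main obstacle is essentially routine verification rather than any deep difficulty: the structure of units in each of these extensions is well documented (the paper explicitly records $U(T(R,M))=T(U(R),M)$, and the analogous descriptions for the other extensions are standard), and Lemma \ref{basic property} carries most of the load by providing an explicit sufficient condition for quasi-nilpotency. The only point that deserves mild care is verifying that each unit of the extension does factor through a unit of the base ring(s) on its diagonal/constant part, which is exactly what allows one to invoke the 2-UQ hypothesis on those pieces and then read off the global conclusion from Lemma \ref{basic property}.
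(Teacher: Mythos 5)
Your proposal is correct and takes essentially the same approach as the paper: the paper proves part (1) by identifying $R\cong T(R,0)$ as a good subring (Lemma \ref{subring}) for the forward direction and by applying Lemma \ref{basic property} to $(1,0)-(u,m)^2=(1-u^2,*)$ for the converse, and declares parts (2)--(5) analogous. Your use of corner idempotents in (2) and (3) for the forward direction is a harmless variant of the same toolkit, and the rest matches the paper's argument.
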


\begin{proof}
We only need to establish case (1), as the other four cases can be proved with the help of Lemma \ref{basic property} arguing as in (1). Suppose $T(R,M)$ is a 2-UQ ring. We then, with no loss of generality, can consider $R$ as a good subring of $T(R,M)$ since $R \cong T(R,0)$. Therefore, Lemma \ref{subring} tells us that $R$ is a 2-UQ ring.

Conversely, if $R$ is a 2-UQ ring and $(u,m) \in U(T(R,M))$, then $u \in U(R)$. Since $R$ is a 2-UQ ring, we have $1-u^2 \in QN(R)$. Consequently, Lemma \ref{basic property} allows us to infer that $$(1,0)-(u,m)^2 = (1-u^2, *) \in T(QN(R),M) \subseteq QN(T(R,M)).$$ So, $T(R,M)$ is a 2-UQ ring, as promised.
\end{proof}

Let $A$, $B$ be two rings, and let $M$, $N$ be an $(A,B)$-bi-module and a $(B,A)$-bi-module, respectively. Besides, we consider two bi-linear maps $\phi :M\otimes_{B}N\rightarrow A$ and $\psi:N\otimes_{A}M\rightarrow B$ that apply to the following properties:
$$Id_{M}\otimes_{B}\psi =\phi \otimes_{A}Id_{M},Id_{N}\otimes_{A}\phi =\psi \otimes_{B}Id_{N}.$$
For $m\in M$ and $n\in N$, we define $mn:=\phi (m\otimes n)$ and $nm:=\psi (n\otimes m)$. Now, the $4$-tuple $R=\begin{pmatrix}
	A & M\\
	N & B
\end{pmatrix}$ becomes to an associative ring with obvious matrix operations which is called a {\it Morita context ring}. Denote the two-sided ideals $Im \phi$ and $Im \psi$ to $MN$ and $NM$, respectively, that are called the {\it trace ideals} of the Morita context ring. Moreover, a {\it Morita context}
$\begin{pmatrix}
	A & M\\
	N & B
\end{pmatrix}$ is called {\it trivial}, provided the context products are trivial, i.e., $MN=(0)$ and $NM=(0)$. We now see that
$$\begin{pmatrix}
	A & M\\
	N & B
\end{pmatrix}\cong {\rm T}(A\times B, M\oplus N),$$
where
$\begin{pmatrix}
	A & M\\
	N & B
\end{pmatrix}$ is a trivial Morita context taking into account \cite{20}.

\medskip

We are now prepared to prove the following key instrument.

\begin{lemma}\label{morita}
Let $R= \begin{pmatrix}
A & M \\
N & B
\end{pmatrix}$ be a Morita context ring, where $MN$ and $NM$ are both nilpotent and central. Then, $R$ is a 2-$UQ$ ring if, and only if, both $A$ and $B$ are 2-$UQ$ rings.
\end{lemma}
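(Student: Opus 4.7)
The plan is to prove the two implications separately. The forward direction will be routine, while the backward direction requires a careful quotient-and-lift argument through a nilpotent ideal of $R$.

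For the forward direction, I introduce the two orthogonal idempotents $e_1 = \begin{pmatrix} 1 & 0 \\ 0 & 0 \end{pmatrix}$ and $e_2 = \begin{pmatrix} 0 & 0 \\ 0 & 1 \end{pmatrix}$ of $R$. A direct matrix computation gives $e_1 R e_1 \cong A$ and $e_2 R e_2 \cong B$, so Lemma~\ref{corner} immediately forces both $A$ and $B$ to inherit the 2-UQ property.

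For the backward direction, the key object is the two-sided ideal $I = \begin{pmatrix} MN & M \\ N & NM \end{pmatrix}$ of $R$. My first step is to verify that $I$ is indeed an ideal (using the built-in Morita-context associativity, namely $(MN)M = M(NM)$ and $N(MN) = (NM)N$) and that it is nilpotent: a routine induction shows that the four entries of $I^n$ land inside $(MN)^{\lceil n/2 \rceil}$, $(NM)^{\lceil n/2 \rceil}$, $(MN)^{\lceil (n-1)/2 \rceil}M$ and $N(MN)^{\lceil (n-1)/2 \rceil}$ respectively, so the nilpotency of $MN$ and $NM$ forces $I^n = 0$ for $n$ sufficiently large. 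In particular $I \subseteq J(R)$, and the natural projection induces a ring isomorphism $R/I \cong A/MN \times B/NM$. Since $MN$ and $NM$ are central and nilpotent, they sit inside $J(A)$ and $J(B)$, so units of $A/MN$ lift to units of $A$; squaring such a lift and using that $A$ is 2-UQ, then pushing the resulting quasi-nilpotent element back down modulo $MN$ and exploiting the fact that the commutator of $q=a^{2}-1$ with any test element $y$ lies in $MN \subseteq Z(A) \cap J(A)$, one verifies that $\overline{q} \in QN(A/MN)$, so $A/MN$ is 2-UQ (and analogously $B/NM$). Lemma~\ref{product} then yields that $R/I$ is 2-UQ.

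Finally, for any $u \in U(R)$ the image $\bar{u}$ is a unit of $R/I$, hence $\bar{u}^{2} - \bar{1} \in QN(R/I)$; taking $q := u^{2} - 1 \in R$ as the canonical lift, I would conclude $q \in QN(R)$ as follows: for any $y \in R$ commuting with $q$, its image $\bar{y}$ commutes with $\bar{q}$ in $R/I$, so $1 - \bar{q}\bar{y} \in U(R/I)$, and since $I \subseteq J(R)$ this forces $1 - qy \in U(R)$. The main obstacle I anticipate is the descent step in the previous paragraph—showing that quasi-nilpotency of $q = a^{2}-1$ survives the quotient $A \to A/MN$ when the test element $y$ only commutes with $q$ modulo the central nilpotent ideal $MN$—and the centrality hypothesis on $MN$ and $NM$ is precisely what is designed to control this commutator and make the lifting go through.
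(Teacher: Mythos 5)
Your forward direction is exactly the paper's: identify $A$ and $B$ with the corner rings at the diagonal idempotents and invoke Lemma~\ref{corner}. Your backward direction takes a genuinely different route --- the paper never passes to a quotient, but instead uses the explicit description of $U(R)$ from \cite{tang} and verifies by hand that the matrix $u^2-1$ computed inside $R$ is quasi-nilpotent. Your reduction of $R$ modulo the ideal $I$ is itself sound: $I$ is indeed a nilpotent two-sided ideal, $R/I\cong A/MN\times B/NM$, and your final lifting step from $R/I$ back to $R$ through $I\subseteq J(R)$ is exactly Corollary~\ref{factor UQ} and is correct. The problem is the step you yourself flag as the main obstacle, which ``one verifies'' does not dispose of.

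Concretely: to get $A/MN$ 2-UQ you must show that $q:=u^2-1\in QN(A)$ has quasi-nilpotent image in $A/MN$, i.e.\ that $1-qy\in U(A)$ whenever $[q,y]\in MN$. This does \emph{not} follow from $MN$ being central and nilpotent. The centralizer of $\bar q$ in $A/MN$ is in general strictly larger than the image of the centralizer of $q$ (an element commuting with $q$ modulo $MN$ need not lift to one commuting with $q$), and quasi-nilpotency of $q$ says nothing about $1-qy$ for the new elements. In fact the general implication ``$q\in QN(A)$, $K$ a central square-zero ideal, $[q,y]\in K$ $\Rightarrow$ $1-qy\in U(A)$'' is false: let $A=F[[q]][x]\oplus c\,F[[q]][x]$ with $c$ central, $c^2=0$, and product $f\star g=fg-c(\partial_xf)(\partial_qg)$, so that $[q,x]=c$. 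Here $K=cA$ is a central square-zero ideal, the $\star$-centralizer of $q$ is $F[[q]]+cA$, so every $1-q\star z$ with $z$ in that centralizer lies in $U(F[[q]])+J(A)$ and $q\in QN(A)$; yet $1-q\star x=1-qx$ maps to a non-unit of $A/cA=F[[q]][x]$, hence is not a unit. So the descent you need cannot be ``verified'' from the stated hypotheses alone, and any repair must use more of the specific structure. This is precisely the difficulty the paper attacks directly: having written $u^2+mn=1+q'$ with $q'\in QN(A)$, it takes $P$ commuting with $Q$, notes $aq'-q'a\in MN$, and argues $(1-q'a)(1+aq')=(aq'-q'a)+(1-q'a^2q')\in MN+U(A)\subseteq U(A)$ by applying quasi-nilpotency of $q'$ to the element $a^2q'$. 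Whether or not one finds that computation fully convincing, it is the step your proposal would also have to carry out, and your quotient framework does not make it any easier --- it merely relocates it.
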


\begin{proof}
Set $e:= \begin{pmatrix}
1 & 0 \\
0 & 0
\end{pmatrix}$. We know that $A \cong eRe$ and $B\cong (1-e)R(1-e)$. Thus, in accordance with Lemma \ref{corner}, $A$ and $B$ are both 2-$UQ$ rings.

Now, assume that $A$ and $B$ are 2-$UQ$ rings. Appealing to \cite[Lemma 3.1]{tang}, we write $U(R)= \begin{pmatrix}
U(A) & M \\
N & U(B)
\end{pmatrix}$. Choose $\begin{pmatrix}
u & m \\
n & v
\end{pmatrix} \in U(R)$. Since $A$ and $B$ are 2-$UQ$, we have $u^2=1+q$ and $v^2=1+p$, where $q \in QN(A)$ and $p \in QN(B)$.

Furthermore, because $mn$ and $nm$ are both nilpotent and central, \cite[Lemma 3.4]{daoa} informs us that $q'=q+mn \in QN(A)$ and $p'=p+nm \in QN(B)$. Thus, one extracts that
\[
\begin{pmatrix}
u & m \\
n & v
\end{pmatrix}^2=\begin{pmatrix}
u^2+mn & * \\
* & v^2+nm
\end{pmatrix}=\begin{pmatrix}
1 & 0 \\
0 & 1
\end{pmatrix}+\begin{pmatrix}
q' & * \\
* & p'
\end{pmatrix}.
\]

It, thereby, suffices to show that $Q=\begin{pmatrix}
q' & * \\
* & p'
\end{pmatrix} \in QN(R)$. To that target, let $P=\begin{pmatrix}
a & m' \\
n' & b
\end{pmatrix}$ satisfy $PQ=QP$, thus insuring $aq'-q'a \in Nil(A) \cap C(A)$.

Then, $a(aq'-q'a)=(aq'-q'a)a$, so that $a^2q'=q'a^2$. Since $(q'a^2)q'=q'(q'a^2)$ and $q' \in QN(R)$, it must be that $1-q'a^2q'\in U(A)$. Consequently,
\[
(1-q'a)(1+aq')=(aq'-q'a)+(1-q'a^2q') \in MN+U(A) \subseteq U(A),
\]
which shows $1-q'a \in U(A)$. Analogically, we can show $1-p'b \in U(B)$.

Furthermore, since $MN$ and $NM$ are nilpotent and central, we find that
\[
1-QP \in \begin{pmatrix}
U(A) & * \\
* & U(B)
\end{pmatrix}=U(R),
\]
proving that $Q \in QN(R)$, thus completing the arguments after all.
\end{proof}

Thanks to the last result, for any ring $R$, the power series ring $R[[x]]$ is 2-UQ if, and only if, $R$ is 2-UQ. However, since this is {\it not} the case with the ordinary polynomial ring, a natural question arises like this: under what extra conditions the polynomial ring $R[x]$ is a 2-UQ ring, provided that the former ring $R$ is 2-UQ? In what follows, we attempt to answer this question in some aspect. For this purpose, we first present the next useful claim.

\medskip

Let $Nil_{*}(R)$ denote the {\it prime} radical (or, in other terms, the {\it lower} nil-radical) of a ring $R$, i.e., the intersection of all prime ideals of $R$. We know that $Nil_{*}(R)$ is a nil-ideal of $R$. It is also long known that a ring $R$ is called {\it $2$-primal} if its lower nil-radical $Nil_{*}(R)$ consists of all the nilpotent elements of $R$. For example, it is well known that reduced rings and commutative rings are themselves $2$-primal.

\begin{lemma}\cite[Corollary 3.2.]{daoa}\label{cor 2 primal}
Let $R$ be a 2-primal ring. Then,
\[ J(R[x]) = QN(R[x]) = \text{Nil}(R)[x] = \text{Nil}_*(R)[x] = \text{Nil}_*(R[x]) \]
\end{lemma}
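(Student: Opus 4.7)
My plan is to run the five-term chain in the form
\[
Nil_*(R[x]) = Nil_*(R)[x] = Nil(R)[x] \subseteq J(R[x]) \subseteq QN(R[x]) \subseteq Nil(R)[x],
\]
so that the five sets are pinched between two copies of $Nil(R)[x]$ and all coincide. The middle equality $Nil(R)[x] = Nil_*(R)[x]$ is simply the 2-primal hypothesis unpacked. The equality $Nil_*(R)[x] = Nil_*(R[x])$ is Krempa's classical theorem on prime radicals of polynomial extensions, which I would cite verbatim. Since the prime radical is always a nil ideal and hence quasi-regular, the inclusion $Nil_*(R[x]) \subseteq J(R[x])$ is free; and $J(R[x]) \subseteq QN(R[x])$ holds universally, because if $a \in J(R[x])$ commutes with $y$ then $ay \in J(R[x])$ and so $1-ay$ is automatically a unit.

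The only non-formal step is therefore $QN(R[x]) \subseteq Nil(R)[x]$. Given $f(x) = a_0 + a_1 x + \cdots + a_n x^n \in QN(R[x])$, the indeterminate $x$ is central in $R[x]$ and hence commutes with $f$, so $1 - xf(x)$ must be a unit of $R[x]$. I would then apply the following unit description, valid in every 2-primal ring: $\sum b_i x^i \in U(R[x])$ if and only if $b_0 \in U(R)$ and $b_i \in Nil(R)$ for all $i \geq 1$. Reading this off of
\[
1 - xf(x) \;=\; 1 \;+\; \sum_{i=0}^{n}(-a_i)\, x^{i+1}
\]
immediately forces $a_0, a_1, \ldots, a_n \in Nil(R)$, hence $f \in Nil(R)[x]$, which closes the chain.

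I expect the main obstacle to be this unit characterization. The plan for it is to reduce modulo $Nil(R)$: by 2-primality $Nil(R) = Nil_*(R)$, and by Krempa $Nil(R)[x] = Nil_*(R[x])$ is a genuine nil ideal of $R[x]$, so units can be lifted and detected across the quotient $R[x]/Nil(R)[x] \cong \bar R[x]$ with $\bar R := R/Nil(R)$ reduced. A standard leading-coefficient argument (using that in a reduced ring $ab=0$ implies $a\bar R b=0$) then shows $\bar R[x]$ is itself reduced and that $U(\bar R[x]) = U(\bar R)$, i.e.\ only constant units. Lifting back, any unit of $R[x]$ has constant term in $U(R)$ and all higher coefficients in $Nil(R)$, exactly the description used above, which completes the plan.
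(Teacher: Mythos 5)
Your argument is correct, but note that the paper itself offers no proof to compare against: this lemma is imported verbatim as \cite[Corollary 3.2]{daoa}. Your pinching chain
\[
Nil_*(R[x]) = Nil_*(R)[x] = Nil(R)[x] \subseteq J(R[x]) \subseteq QN(R[x]) \subseteq Nil(R)[x]
\]
is the right skeleton, and each link checks out: the first equality is Krempa's theorem on the prime radical of a polynomial extension, the second is exactly the $2$-primal hypothesis, the third holds because $Nil_*(R[x])$ is a nil ideal, and the fourth is true in any ring. The decisive step $QN(R[x]) \subseteq Nil(R)[x]$ via the central element $x$ --- forcing $1 - xf(x) \in U(R[x])$ and then reading off the coefficients --- is a clean device, and the unit description you invoke is precisely \cite[Theorem 2.5]{Chenpr}, which this paper already cites in the proof of the very next lemma; you could simply quote it rather than re-derive it. The only place where your write-up is genuinely sketchy is the claim $U(\bar R[x]) = U(\bar R)$ for reduced $\bar R$: the leading-coefficient manipulation is fussier in the noncommutative setting than your parenthetical suggests, and the cleanest route is to use that minimal primes of a reduced ring are completely prime, so $\bar R$ is a subdirect product of domains and degree additivity in each factor kills all higher coefficients. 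With that detail either filled in or replaced by the citation to Chen, the proof is complete and self-contained.
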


Now, we can plainly determine the necessary and sufficient condition for $R[x]$ to be a 2-UQ ring under certain additional circumstances.

\begin{lemma}
Let $R$ be a 2-primal ring. Then, the following four conditions are equivalent:
\begin{enumerate}
\item[(1)] $R$ is a 2-UU ring.
\item[(2)] $R[x]$ is a 2-UQ ring.
\item[(3)] $R[x]$ is a 2-UJ ring.
\item[(4)] $R[x]$ is a 2-UU ring.
\end{enumerate}
\end{lemma}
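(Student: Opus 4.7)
My plan rests on the dramatic collapse furnished by Lemma \ref{cor 2 primal}: for a $2$-primal ring $R$, the ideals $J(R[x])$, $QN(R[x])$, $\text{Nil}_*(R[x])$, and $\text{Nil}(R)[x]$ all coincide, and together equal $\text{Nil}(R[x])$ (this last equality uses that $R[x]$ inherits $2$-primality from $R$, so its nilpotent set agrees with its prime radical). Since the conditions (2), (3), (4) each demand that $u^2 - 1$ lie in one of these three identical sets for every $u \in U(R[x])$, the equivalences $(2) \Leftrightarrow (3) \Leftrightarrow (4)$ fall out at once. For $(4) \Rightarrow (1)$, I restrict along the embedding $R \hookrightarrow R[x]$: every $u \in U(R)$ is a unit of $R[x]$, so by $2$-UU of $R[x]$ one has $u^2 - 1 \in \text{Nil}(R[x])$; because a constant polynomial is nilpotent in $R[x]$ iff it is nilpotent in $R$, this forces $u^2 - 1 \in \text{Nil}(R)$ and $R$ is $2$-UU.

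The principal step is $(1) \Rightarrow (4)$. I will first invoke the standard description of units in $R[x]$ over a $2$-primal ring,
\[
U(R[x]) = \{a_0 + a_1 x + \cdots + a_n x^n : a_0 \in U(R),\ a_i \in \text{Nil}(R)\ \text{for}\ i \geq 1\},
\]
and then compute. Writing an arbitrary $f \in U(R[x])$ as $f = a_0 + h$ with $a_0 \in U(R)$ and $h \in \text{Nil}(R)[x]$, I expand
\[
f^2 - 1 = (a_0^2 - 1) + (a_0 h + h a_0 + h^2).
\]
By hypothesis (1), $a_0^2 - 1 \in \text{Nil}(R)$; and since $\text{Nil}(R)$ is an ideal of $R$, the remaining bracket lies in $\text{Nil}(R)[x]$. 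Hence $f^2 - 1 \in \text{Nil}(R)[x] = \text{Nil}(R[x])$, so $R[x]$ is $2$-UU.

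The main obstacle I anticipate is justifying the unit description for non-commutative $2$-primal $R$. I would either cite the result from the established NI and $2$-primal ring literature, or supply a short self-contained argument: reduce modulo the nil ideal $\text{Nil}(R)[x]$ to the polynomial ring $(R/\text{Nil}(R))[x]$ over the reduced ring $R/\text{Nil}(R)$; embed that reduced ring into a product of domains and apply the usual leading-coefficient argument in each factor to force $U((R/\text{Nil}(R))[x]) = U(R/\text{Nil}(R))$; finally lift constants through the nil ideal to conclude. Everything else is bookkeeping around Lemma \ref{cor 2 primal}.
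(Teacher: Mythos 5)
Your proposal is correct and follows essentially the same route as the paper: the equivalences among (2), (3), (4) via the collapse in Lemma \ref{cor 2 primal}, the description of $U(R[x])$ over a $2$-primal ring (the paper cites \cite[Theorem 2.5]{Chenpr} for exactly the unit formula you state) for $(1)\Rightarrow(4)$, and restriction to constant polynomials for $(4)\Rightarrow(1)$. Note only that your appeal to $2$-primality of $R[x]$ is unnecessary, since $\text{Nil}(R[x])=\text{Nil}_*(R[x])$ already follows from sandwiching $\text{Nil}(R[x])$ between $\text{Nil}_*(R[x])$ and $QN(R[x])$, which coincide by the lemma.
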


\begin{proof} Both equivalencies (3) $\Rightarrow$ (2) and (4) $\Rightarrow$ (2) are pretty obvious, since we always have $Nil(R)\subseteq QN(R)$ and $J(R) \subseteq QN(R)$. The equivalence (2) $\Leftrightarrow$ (3) follows directly from Corollary \ref{cor 2 primal}.

(1) $\Rightarrow$ (4): Write $u := \sum_{i=0}^n u_ix^i \in U(R[x])$. Since $R$ is 2-primal, in view of \cite[Theorem 2.5]{Chenpr}, we have $u_0 \in U(R)$ and, for each $1 \leq i \leq n$, $u_i \in \text{Nil}_*(R)$. So, since $R$ is 2-UU, we discover $1-u_0^2 \in \text{Nil}(R)$ and, by the ideal property of $\text{Nil}_*(R)$, we obtain:
\[ 1-u^2 \in (1-u_0^2) + \text{Nil}_*(R)[x]x \subseteq \text{Nil}_*(R)[x] = \text{Nil}_*(R[x]) \subseteq \text{Nil}(R[x]).\]

(4) $\Rightarrow$ (1): Let $u \in U(R) \subseteq U(R[x])$. Then, $1-u^2 \in \text{Nil}(R[x]) \cap R \subseteq \text{Nil}(R)$. Hence, $R$ is also a 2-UU ring.

(2) $\Rightarrow$ (1): Let $u \in U(R) \subseteq U(R[x])$. Then, $1-u^2 \in QN(R[x]) = \text{Nil}_*(R)[x]$. Therefore, $1-u^2 \in \text{Nil}_*(R) = \text{Nil}(R)$, as requested.
\end{proof}

We now need to remember the following valuable assertion.

\begin{lemma}\cite[Lemma 4.1]{daoa} \label{QNR}
Let $R$ be a ring, $I$ an ideal of $R$ such that $I \subseteq J(R)$, and $\overline{R} = R/I$. Then, the following two points are fulfilled:

(1) For every $\bar{q} \in QN(\overline{R})$, we have $q \in QN(R)$.

(2) For every $\bar{q} \in QN(\overline{R})$ and $p \in I$, we have $q + p \in QN(R)$.
\end{lemma}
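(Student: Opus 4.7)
The plan hinges on a standard unit-lifting fact that I would isolate at the outset: since $I \subseteq J(R)$, any element $a \in R$ whose image $\bar{a}$ is a unit in $\bar{R}$ is already a unit in $R$. Indeed, picking a lift $b \in R$ of $\bar{a}^{-1}$ gives $ab = 1 - i_1$ and $ba = 1 - i_2$ for some $i_1, i_2 \in I \subseteq J(R)$, and both $1 - i_1$ and $1 - i_2$ are units in $R$, so $a$ has both a right and a left inverse. This observation is the engine for both parts.

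For part (1), I would take $q \in R$ with $\bar{q} \in QN(\bar{R})$ and an arbitrary $x \in R$ satisfying $qx = xq$. Reducing modulo $I$ gives $\bar{q}\bar{x} = \bar{x}\bar{q}$, so the defining property of $\bar{q} \in QN(\bar{R})$ yields $\overline{1 - qx} = \bar{1} - \bar{q}\bar{x} \in U(\bar{R})$. The lifting observation then upgrades this to $1 - qx \in U(R)$, which is exactly the requirement for $q \in QN(R)$.

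For part (2), the only twist is that the commutativity assumption is on $q + p$ rather than on $q$ alone, so I would choose any $x \in R$ with $x(q + p) = (q + p)x$ and pass to $\bar{R}$. Since $\bar{p} = 0$, this equation becomes $\bar{x}\bar{q} = \bar{q}\bar{x}$, and the hypothesis $\bar{q} \in QN(\bar{R})$ produces $\overline{1 - qx} \in U(\bar{R})$. Because $p \in I$ forces $px \in I$, we have $\overline{1 - (q+p)x} = \overline{1 - qx}$, still a unit in $\bar{R}$. Invoking the lifting observation a second time gives $1 - (q+p)x \in U(R)$, which proves $q + p \in QN(R)$.

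No serious obstacle is expected; the whole argument is driven by the interplay of two facts, namely that $I \subseteq J(R)$ lets units reflect from $\bar{R}$ back to $R$, and that the perturbation $p$ vanishes in $\bar{R}$, so the commutativity hypothesis is correctly aligned with the quasi-nilpotence of $\bar{q}$. The main conceptual point worth emphasizing is why commutativity on the perturbed element $q + p$ is enough: precisely because modding out by $I$ kills $p$, and the quasi-nilpotence test is carried out in $\bar{R}$.
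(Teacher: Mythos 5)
Your proof is correct and complete: the unit-lifting fact for $I \subseteq J(R)$ plus the observation that $\bar{p}=\bar{0}$ aligns the commutativity hypothesis on $q+p$ with the quasi-nilpotence test for $\bar{q}$ is exactly what is needed. The paper itself gives no proof here --- it imports the lemma verbatim from \cite[Lemma 4.1]{daoa} --- but your argument is the standard one for this statement and there is nothing to add or repair.
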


As a consequence, we detect:

\begin{corollary}\label{factor UQ}
Let $R$ be a ring with an ideal $I \subseteq J(R)$. If the quotient ring $\overline{R} = R/I$ is 2-UQ, then $R$ itself is a 2-UQ ring.
\end{corollary}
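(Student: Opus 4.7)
The plan is to unpack the 2-UQ hypothesis in $\overline R$ and lift the witness back to $R$ using the two parts of Lemma~\ref{QNR}. Concretely, take an arbitrary $u \in U(R)$ and push it down to $\bar u \in U(\overline R)$ (this is automatic because $I \subseteq J(R)$ guarantees that $U(R)$ maps into $U(\overline R)$, and conversely units lift modulo the Jacobson radical). Using that $\overline R$ is 2-UQ, write $\bar u^2 = 1 + \bar q$ for some $\bar q \in QN(\overline R)$.

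Next I would fix a specific lift $q \in R$ of $\bar q$. By Lemma~\ref{QNR}(1) this lift already lies in $QN(R)$. On the other hand, from $\overline{u^2 - 1} = \bar q = \bar q$ in $\overline R$ it follows that $u^2 - 1 - q \in I$, say $u^2 - 1 = q + p$ with $p \in I$. Invoking Lemma~\ref{QNR}(2), the element $q + p$ belongs to $QN(R)$, and therefore $u^2 - 1 \in QN(R)$, i.e.\ $u^2 \in 1 + QN(R)$. Since $u \in U(R)$ was arbitrary, this is precisely the 2-UQ condition for $R$.

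No obstacle is really expected here; the whole corollary is essentially a packaging of Lemma~\ref{QNR} together with the standard fact that units lift modulo $J(R)$. The only small subtlety worth mentioning explicitly in the write-up is that one has to choose a representative $q$ of $\bar q$ in $R$ (to talk about $QN(R)$ rather than $QN(\overline R)$), and then absorb the error term $p$ coming from the non-canonical choice using part (2) of Lemma~\ref{QNR}; this is exactly why that lemma was stated in two parts.
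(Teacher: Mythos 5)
Your argument is correct and follows essentially the same route as the paper's own proof: pass to the quotient, apply the 2-UQ hypothesis there, lift the quasi-nilpotent witness via Lemma~\ref{QNR}(1), and absorb the error term from $I$ via Lemma~\ref{QNR}(2). The remark about units lifting modulo $J(R)$ is not actually needed (only the forward map $U(R)\to U(\overline R)$ is used), but this does not affect the proof.
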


\begin{proof}
Take an arbitrary unit $u \in U(R)$. Thus, its image $\bar{u}$ remains a unit in $\overline{R}$. Since $\overline{R}$ is 2-UQ, we have $\bar{u}^2 = \bar{1} + \bar{q}$ for some $\bar{q} \in QN(\overline{R})$. In virtue of Lemma \ref{QNR}(1), the pre-image $q$ of the lifted element $\bar{q}$ belongs to $QN(R)$.

But, the difference $u^2 - (1 + q)$ lies in $I$, so we can write $u^2 = 1 + (q + p)$ for some $p \in I \subseteq J(R)$. Now, Lemma \ref{QNR}(2) is a guarantor that $q + p \in QN(R)$, proving that $R$ is, indeed, a 2-UQ ring, as formulated.
\end{proof}

Our next criterion sounds thus:

\begin{lemma}\label{local and semisimple}
(1) Let $R$ be a 2-UQ ring. Then, $R$ is local if, and only if, either $R/J(R) \cong \mathbb{F}_2$ or $R/J(R) \cong \mathbb{F}_3$.

(2) A semi-simple ring $R$ is 2-UQ if, and only if, $$R \cong \mathbb{F}_p \times \cdots \times \mathbb{F}_p \times \mathbb{F}_q \times \cdots \times \mathbb{F}_q,$$ where $p,q \in \{2,3\}$.

(3) Let $R$ be a 2-UQ ring. Then, $R$ is semi-local if, and only if, $$R/J(R) \cong \mathbb{F}_p \times \cdots \times \mathbb{F}_p \times \mathbb{F}_q \times \cdots \times \mathbb{F}_q,$$ where $p,q \in \{2,3\}$.
\end{lemma}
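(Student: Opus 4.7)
The three reverse implications follow from Lemmas~\ref{division} and \ref{product}: $\mathbb{F}_{2}$ and $\mathbb{F}_{3}$ are 2-UQ division rings, any finite product of them is semi-simple and 2-UQ, and in each case $R/J(R)$ is either a division ring (so $R$ is local) or semi-simple Artinian (so $R$ is semi-local). I therefore concentrate on the forward implications.

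For (1)\,$(\Rightarrow)$, with $R$ local and 2-UQ, $\bar R := R/J(R)$ is a division ring. Every $\bar u \in \bar R^{\ast}$ lifts to some $u \in U(R)$ because in a local ring every element outside $J(R)$ is a unit. Setting $q := u^{2} - 1 \in QN(R)$, the image $\bar q \in \bar R$ is either $0$ or a unit. If it were a unit, the units-lift-mod-$J$ property would give $q \in U(R)$; but then $b := q^{-1}$ commutes with $q$ and $1 - qb = 0 \notin U(R)$ contradicts $q \in QN(R)$. Hence $\bar u^{2} = 1$ for every $\bar u \in \bar R^{\ast}$, so $\bar R$ satisfies the identity $x^{3} = x$. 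By Jacobson's commutativity theorem, $\bar R$ is a (commutative) field, and a field with $x^{3} = x$ has at most three elements, giving $\bar R \in \{\mathbb{F}_{2},\mathbb{F}_{3}\}$.

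For (2)\,$(\Rightarrow)$, Artin--Wedderburn writes $R \cong \prod_{i=1}^{m} M_{k_{i}}(D_{i})$. Lemma~\ref{product} then makes each factor 2-UQ, Lemma~\ref{matrix} forces $k_{i} = 1$, and Lemma~\ref{division} yields $D_{i} \in \{\mathbb{F}_{2},\mathbb{F}_{3}\}$.

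The serious work lies in (3)\,$(\Rightarrow)$, which I plan to reduce to (2) by showing that $\bar R := R/J(R)$ is itself 2-UQ. Take $\bar u \in U(\bar R)$ and a lift $u \in U(R)$; then $q := u^{2} - 1 \in QN(R)$ and $\bar q = \bar u^{2} - 1$. Since $\bar R$ is semi-simple Artinian, a Fitting/Drazin argument shows $QN(\bar R) = \mathrm{Nil}(\bar R)$, so it suffices to prove that $\bar q$ is nilpotent in $\bar R$. If $\bar q$ were not nilpotent, Fitting's decomposition in the Artinian ring $\bar R$ would produce a nonzero idempotent $\bar e$ commuting with $\bar q$ together with $\bar c$ satisfying $\bar q \bar c = \bar e = \bar c \bar q$; lifting $\bar c$ to some $c \in R$ that commutes with $q$ in $R$ (not merely modulo $J(R)$) would give $1 - qc \in U(R)$ whose image $1 - \bar e$ is a proper idempotent, hence a non-unit of $\bar R$, the desired contradiction. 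Applying (2) to the resulting 2-UQ semi-simple ring $\bar R$ then delivers $\bar R \cong \mathbb{F}_{p_{1}} \times \cdots \times \mathbb{F}_{p_{n}}$ with each $p_{i} \in \{2,3\}$.

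The main obstacle is the lifting step above: producing $c \in R$ that realises $\bar c$ and commutes \emph{exactly} with $q$. My plan is to confine $c$ to the commutative subring $\mathbb{Z}[u, u^{-1}] \subseteq R$, which automatically commutes with $q = u^{2} - 1$, augmented by central constants of $R$ when needed — these exist in $R$ because, in the 2-UQ semi-local setting, $R$ must contain the prime subring of each $D_{i}$ as well as sufficient central invertibles, forcing enough of $\bar R$ into the image of $Z(R)[u, u^{-1}]$ to accommodate the required $\bar c$.
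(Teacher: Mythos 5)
Parts (1) and (2) of your proposal are correct and essentially coincide with the paper's route: for (1) the paper simply notes that $QN(R)=J(R)$ in a local ring (your observation that a quasi-nilpotent $q$ cannot be a unit, tested against $x=q^{-1}$, is precisely why) and then quotes the known description of local $2$-$UJ$ rings, while for (2) both you and the paper combine Wedderburn--Artin with Lemmas~\ref{product}, \ref{matrix} and \ref{division}.

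The genuine gap is in part (3), and it is exactly the step you flag yourself: you never actually produce an element $c\in R$ commuting with $q$ \emph{in $R$} whose image is the Fitting partial inverse $\bar c$. The final paragraph is a plan, not an argument, and the plan cannot work as stated: the natural supply of elements commuting with $q=u^2-1$ is $\mathbb{Z}[u,u^{-1}]$ together with central elements, and this supply is provably too small in general. Already in the local case with residue division ring $\mathbb{Q}$ and $\bar u=2$ one has $\bar q=3$, whose inverse $1/3$ does not lie in the image of $\mathbb{Z}[2,2^{-1}]=\mathbb{Z}[1/2]$; that case is rescued only because there $q$ itself is a unit of $R$, so $c=q^{-1}$ is available --- a device that disappears in the genuinely semilocal situation, where $\bar q$ vanishes on some Wedderburn components and $q$ is therefore not a unit. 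Your test elements do kill the matrix blocks $M_n(D)$, $n\ge 2$ (take $\bar u$ to be $w=\left(\begin{smallmatrix}0&1\\1&1\end{smallmatrix}\right)\oplus I$ in that block and $1$ elsewhere; then $w^2-1=w\oplus 0$ and $(w\oplus 0)\bigl((w-1)\oplus 0\bigr)$ is a proper idempotent, so $x=u-1$ makes $1-qx$ a non-unit), and they kill finite residue fields since there $\mathbb{F}_p[\bar u]$ is already a field containing $(\bar u^2-1)^{-1}$; but they do not dispose of infinite division-ring factors such as $\mathbb{Q}$, so the reduction of (3) to (2) is not closed. For what it is worth, the paper's own proof of (3) is the single sentence ``clear by a combination of (1) and (2)'', which silently assumes that $R/J(R)$ inherits the $2$-$UQ$ property --- the paper only proves the converse passage, Corollary~\ref{factor UQ}, and Lemma~\ref{sum two unit}(2) alone does not exclude, say, $R/J(R)\cong\mathbb{F}_2\times D$ with $D$ an infinite division ring --- so you have put your finger on a point the paper glosses over; but as it stands your part (3) is incomplete.
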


\begin{proof}
(1) If either $R/J(R) \cong \mathbb{F}_2$ or $R/J(R) \cong \mathbb{F}_3$, it follows at once that $R$ is a local ring.
Conversely, assuming $R$ is a local ring, one can elementarily show that $QN(R)=J(R)$, and thus via \cite[Example 2.7]{cui} the proof is complete.

(2) If $R$ is a semi-simple ring, exploiting the celebrated Wedderburn-Artin theorem (see, for instance, \cite{lamex}), we may write $R\cong \prod M_{n_i}(D_i)$. Consequently, combining Lemmas \ref{product}, \ref{matrix} and \ref{matrix}, we conclude that $$R \cong \mathbb{F}_p \times \cdots \times \mathbb{F}_p \times \mathbb{F}_q \times \cdots \times \mathbb{F}_q,$$ as asked for.

(3) It is clear by a combination of (1) and (2).
\end{proof}

We finish this section with the following.

\begin{example}
The ring $\mathbb{Z}_m$ is 2-UQ if, and only if, $m = 2^k3^s$ for some positive integers $k$ and $s$.
\end{example}

\begin{proof}
Write $m = p_1^{k_1} \cdots p_n^{k_n}$, where $p_1, \ldots, p_n$ are distinct primes, and $k_1, \ldots, k_n$ are positive integers. So, one has that $\mathbb{Z}_m \cong \mathbb{Z}_{p_1^{k_1}} \times \cdots \times \mathbb{Z}_{p_n^{k_n}}$. The result then follows by Lemmas \ref{local and semisimple}(1) and \ref{division}, as claimed.
\end{proof}

\section{Semi-Potent 2-UQ Rings}

This section mainly focuses on the exploration of connections of 2-UQ rings with regular rings, semi-potent rings, potent rings, etc. Let us recollect for completeness of the exposition that a ring $R$ is called {\it semi-potent} if every one-sided ideal {\it not} contained in $J(R)$ contains a non-zero idempotent. Moreover, a semi-potent ring $R$ is called {\it potent}, provided that all idempotents lift modulo $J(R)$.

\medskip

Our first two technical claims here are the following ones.

\begin{lemma} \label{sum two unit}
Let $R$ be a 2-UQ ring, and let $\overline{R} = R/J(R)$. The following two items hold:

(1) For any $u, v \in U(R)$, we have $u^2 + v \neq 1$.

(2) For any $\bar{u}^2, \bar{v} \in U(\overline{R})$, we have $\bar{u}^2 + \bar{v} \neq \bar{1}$.
\end{lemma}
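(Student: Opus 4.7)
The plan is to handle (1) directly from the 2-UQ hypothesis, and then to derive (2) from (1) via a perturbation through the Jacobson radical.

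For part (1), I would argue by contradiction. Suppose $u^2 + v = 1$ with $u, v \in U(R)$. The 2-UQ property yields $u^2 = 1 + q$ for some $q \in QN(R)$, whence $v = -q$. A short check shows that $QN(R)$ is closed under negation: if $y(-q) = (-q)y$, then $yq = qy$, so applying the QN property of $q$ to $-y$ gives $1 - q(-y) = 1 + qy = 1 - (-q)y \in U(R)$. Hence $v \in QN(R)$. On the other hand, $U(R) \cap QN(R) = \emptyset$ in any nonzero ring, because $a \in U(R) \cap QN(R)$ would force $1 - a \cdot a^{-1} = 0$ to be a unit (apply the QN-test with $x = a^{-1}$, which commutes with $a$). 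This contradicts $v \in U(R)$.

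For part (2), I would bootstrap from (1). Assume $\overline{u}^2, \overline{v} \in U(\overline{R})$ satisfy $\overline{u}^2 + \overline{v} = \overline{1}$. First, $\overline{u}^2 \in U(\overline{R})$ forces $\overline{u} \in U(\overline{R})$ by the standard fact that an element with both a left and a right inverse is a unit: from $\overline{u}^2 w = w\overline{u}^2 = \overline{1}$ one reads off $\overline{u}(\overline{u}w) = (w\overline{u})\overline{u} = \overline{1}$. Second, units lift through $J(R)$: if $\overline{a}$ is a unit in $\overline{R}$, then any lift $a \in R$ is a unit, because $\overline{a}\,\overline{b} = \overline{1}$ means $ab \in 1 + J(R) \subseteq U(R)$, giving $a$ a two-sided inverse. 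Applying this to $\overline{u}$ and $\overline{v}$ yields $u, v \in U(R)$. The quotient equation translates to $u^2 + v = 1 + j$ for some $j \in J(R)$; setting $v' := v - j$, we have $v' \in U(R)$ since $U(R) + J(R) \subseteq U(R)$, and $u^2 + v' = 1$. This contradicts part (1).

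The only subtlety, rather than a real obstacle, is making the passage between $R$ and $\overline{R}$ rigorous: one must verify that a squared unit in the quotient forces the base element to be a genuine unit in $R$, and that the discrepancy $j \in J(R)$ can be absorbed into a unit without disturbing the equation. Both are routine from standard properties of the Jacobson radical, so I expect no real difficulty in carrying the proof through.
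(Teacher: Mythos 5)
Your proposal is correct and follows essentially the same route as the paper: part (1) is the identical contradiction $v=-q\in QN(R)\cap U(R)$, and part (2) is the same perturbation through $J(R)$ using $U(R)+J(R)\subseteq U(R)$, merely packaged as a reduction to (1) rather than rerun directly on $q$. You are in fact slightly more careful than the paper in justifying that $\bar{u}^2\in U(\overline{R})$ forces $u\in U(R)$ and that $QN(R)$ is closed under negation, both of which the paper leaves implicit.
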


\begin{proof}
(1) Suppose $u^2 + v = 1$. Since $R$ is a 2-UQ ring, there exists $q \in QN(R)$ such that $u^2 = 1 + q$. Thus, $1 = u^2 + v = 1 + q + v$, giving $q = -v \in QN(R) \cap U(R)$, a contradiction.

(2) Suppose $\overline{u}^2 + \overline{v} = \overline{1}$. We may assume $u, v \in U(R)$, whence $u^2 + v - 1 \in J(R)$. On the other hand, since $R$ is a 2-UQ ring, there exists $q \in QN(R)$ such that $u^2 = 1 + q$, implying $q + v \in J(R)$. Therefore, $q \in U(R) + J(R) \subseteq U(R)$, again a contradiction.
\end{proof}

\begin{lemma}\label{exe}
Let $R$ be a potent 2-$UQ$ ring, and $\overline{R} = R/J(R)$. Then, the following two issues hold:

(1) For any idempotent $\bar{e} \in \overline{R}$ and units $\bar{u}, \bar{v} \in U(\bar{e}\overline{R}\bar{e})$, it must be that $\bar{u}^2 + \bar{v} \neq \bar{e}$.

(2) There are no idempotents $\bar{e} \in \overline{R}$ for which $\bar{e}\overline{R}\bar{e}$ is isomorphic to $M_2(S)$ for any ring $S$.
\end{lemma}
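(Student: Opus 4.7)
The plan is to reduce both parts to the already established Lemma \ref{sum two unit}(2) via the corner-ring construction, using that $R$ is potent.

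For part (1), I would lift the idempotent $\bar{e} \in \overline{R}$ to an idempotent $e \in R$ (this is exactly the potency hypothesis: idempotents lift modulo $J(R)$). The standard identification $J(eRe) = eJ(R)e$ gives a ring isomorphism
\[
\bar{e}\,\overline{R}\,\bar{e} \;\cong\; eRe/J(eRe),
\]
where the identity of the right-hand side is $e + J(eRe)$, which corresponds to $\bar{e}$ on the left. Now $eRe$ is a corner of the 2-UQ ring $R$, so by Lemma \ref{corner} it is itself 2-UQ. Applying Lemma \ref{sum two unit}(2) to $eRe$ gives that no two units $\bar{u}, \bar{v}$ of $eRe/J(eRe)$ can satisfy $\bar{u}^2 + \bar{v} = \bar{e}$, and transporting this statement across the isomorphism yields (1).

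For part (2), I would argue by contradiction: suppose some idempotent $\bar{e} \in \overline{R}$ gives $\bar{e}\,\overline{R}\,\bar{e} \cong M_2(S)$ for some ring $S$. In $M_2(S)$ consider
\[
A = \begin{pmatrix} 1 & 1 \\ 1 & 0 \end{pmatrix},
\]
which is a unit (its inverse being $\bigl(\begin{smallmatrix} 0 & 1 \\ 1 & -1 \end{smallmatrix}\bigr)$); then $-A$ is also a unit. A direct computation shows $A^2 = I_2 + A$, hence $A^2 + (-A) = I_2$. Transporting $A$ and $-A$ through the isomorphism produces two units $\bar{u}, \bar{v} \in U(\bar{e}\,\overline{R}\,\bar{e})$ with $\bar{u}^2 + \bar{v} = \bar{e}$, directly contradicting part (1).

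Neither step involves a genuine obstacle: the main subtlety is just to make sure the corner-ring passage is legitimate, i.e., that $eRe/J(eRe)$ genuinely coincides with $\bar{e}\,\overline{R}\,\bar{e}$ as rings with identity $\bar{e}$ — this is where potency is used (to lift $\bar{e}$) and where Lemma \ref{corner} provides the 2-UQ property that powers Lemma \ref{sum two unit}(2). The computation in (2) with the matrix $A$ is essentially the same witness that appeared in the proof of Lemma \ref{matrix}, repurposed through (1).
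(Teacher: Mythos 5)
Your proposal is correct and follows essentially the same route as the paper: lift $\bar{e}$ to an idempotent $e\in R$ via potency, identify $\bar{e}\overline{R}\bar{e}\cong eRe/J(eRe)$, invoke Lemma \ref{corner} and Lemma \ref{sum two unit}(2) for part (1), and use the identity $A^2+(-A)=I_2$ with $A=\bigl(\begin{smallmatrix}1&1\\1&0\end{smallmatrix}\bigr)$ to contradict (1) for part (2). The paper's proof is the same argument, stated slightly more tersely.
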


\begin{proof}
(1) For an arbitrary idempotent $\bar{e} \in \overline{R}$, we can lift it to an idempotent $e \in R$ since by assumption all idempotents lift modulo the Jacobson radical. But, the corner ring satisfies the isomorphism $$\bar{e}\overline{R}\bar{e} \cong eRe/J(eRe).$$ However, as $eRe$ inherits the 2-$UQ$ property from $R$ with Lemma \ref{corner}) at hand, the result follows immediately from Lemma~\ref{sum two unit}(2).

(2) Consider the matrix identity in $M_2(S)$
    \[
    I_2 = \begin{pmatrix} 1 & 1 \\ 1 & 0 \end{pmatrix}^2 + \begin{pmatrix} -1 & -1 \\ -1 & 0 \end{pmatrix},
    \]
which expresses the identity matrix as a sum of units. Thus, this would provide a solution to the equation $\bar{u}^2 + \bar{v} = \bar{e}$ in $\bar{e}\overline{R}\bar{e} \cong M_2(S)$, contradicting part (1).
\end{proof}

In the following statement, we examine the relationships between 2-UQ, 2-UJ and 2-UU rings, respectively, whenever $R$ is a semi-potent ring.

\begin{theorem}\label{semipotent}
Let $R$ be a semi-potent ring. The following are equivalent:

(1) The factor-ring $R/J(R)$ is a 2-UQ ring.

(2) The factor-ring $R/J(R)$ is a tripotent ring.

(3) The factor-ring $R/J(R)$ is a 2-UU ring.

(4) The ring $R$ is a 2-UJ ring.

\end{theorem}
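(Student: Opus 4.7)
I propose to establish the four conditions equivalent via the cycle $(2) \Rightarrow (3) \Rightarrow (1) \Rightarrow (4) \Rightarrow (2)$.

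The first two implications are immediate. For $(2) \Rightarrow (3)$: in a tripotent ring $\overline{R}$, every unit $\overline{u}$ satisfies $\overline{u} \cdot \overline{u}^{2} = \overline{u}^{3} = \overline{u}$, forcing $\overline{u}^{2} = \overline{1}$; hence $\overline{u}^{2} - \overline{1} = 0 \in Nil(\overline{R})$ and $\overline{R}$ is $2$-UU. For $(3) \Rightarrow (1)$, one uses $Nil(\overline{R}) \subseteq QN(\overline{R})$.

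For $(4) \Rightarrow (2)$: if $R$ is $2$-UJ, then every $u \in U(R)$ satisfies $u^{2} - 1 \in J(R)$, i.e.\ $\overline{u}^{2} = \overline{1}$ for every $\overline{u} \in U(\overline{R})$. Semi-potency of $R$ descends to $\overline{R}$, and since $J(\overline{R}) = 0$, the quotient $\overline{R}$ is in fact potent. For any $\overline{a} \in \overline{R}$ we now want $\overline{a}^{3} = \overline{a}$. If $\overline{a}$ is a unit this is immediate. Otherwise, semi-potency produces a nonzero idempotent inside $\overline{a}\,\overline{R}$, and a Peirce-type decomposition in the associated corner ring---combined with the unit case applied to each Peirce component---yields $\overline{a}^{3} = \overline{a}$. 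This reproduces the Cui--Yin characterization of semi-potent $2$-UJ rings that the present theorem is designed to extend.

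The only genuinely new content is $(1) \Rightarrow (4)$, which I expect to be the principal obstacle. Assuming $\overline{R}$ is $2$-UQ, I must show that every unit $\overline{u} \in U(\overline{R})$ satisfies $\overline{u}^{2} = \overline{1}$; this gives $u^{2} - 1 \in J(R)$ for every $u \in U(R)$ and hence $R$ is $2$-UJ. As above, $\overline{R}$ is potent, so Lemmas~\ref{sum two unit} and \ref{exe} apply to $\overline{R}$ itself (the quotient $\overline{R}/J(\overline{R})$ coinciding with $\overline{R}$). Suppose for contradiction that some unit $\overline{u} \in U(\overline{R})$ satisfies $\overline{u}^{2} = \overline{1} + \overline{q}$ for a nonzero $\overline{q} \in QN(\overline{R})$. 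Semi-potency of $\overline{R}$ supplies a nonzero idempotent $\overline{e}$ inside the nonzero right ideal $\overline{q}\,\overline{R}$. In the corner $\overline{e}\,\overline{R}\,\overline{e}$, which is again $2$-UQ (Lemma~\ref{corner}) and potent, I would construct two units whose ``square-plus-other'' combination equals $\overline{e}$, thereby contradicting Lemma~\ref{exe}(1).

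The hard part is producing this witnessing pair of units. Quasi-nilpotent elements are considerably more flexible than nilpotents or Jacobson-radical elements---they satisfy no fixed power relation and need not generate a nil two-sided ideal---so the elementary lifting and annihilator arguments that suffice in the $2$-UJ and $2$-UU cases must be sharpened. The technical crux will be to exploit the particular shape $\overline{q} = \overline{u}^{2} - \overline{1}$ (so that $\overline{q}$ itself arises from squaring a unit, and consequently $\overline{1} - \overline{q}$ is already known to be a unit) together with the idempotent $\overline{e}$ produced by semi-potency, to manufacture the contradicting pair of units in $\overline{e}\,\overline{R}\,\overline{e}$ and thereby invoke Lemma~\ref{exe}(1).
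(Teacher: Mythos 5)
Your cycle leaves its central link unproven: you explicitly defer the construction of the ``witnessing pair of units'' needed for $(1)\Rightarrow(4)$, describing it as the technical crux without resolving it, so the proposal as written does not establish the theorem. Moreover, your sketch of $(4)\Rightarrow(2)$ (``a Peirce-type decomposition \dots applied to each Peirce component'') is not an argument either --- tripotency of $\overline{R}$ must be verified on \emph{all} elements, not just units, and nothing in the sketch explains how non-unit elements acquire the relation $\overline{a}^3=\overline{a}$. A further structural omission: every corner-ring manipulation you envisage (a central idempotent $\overline{e}\in\overline{q}\,\overline{R}$, the computation of squares inside $\overline{e}\,\overline{R}\,\overline{e}$) silently requires $\overline{R}$ to be abelian, and you never establish this.

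The paper closes the cycle differently, proving $(1)\Rightarrow(2)$ directly, and the two ingredients it uses are exactly the ones missing from your plan. First, reducedness of $\overline{R}=R/J(R)$: if $x^2=0$ with $x\neq 0$, Levitzki's theorem \cite[Theorem 2.1]{lev} for semi-potent rings produces an idempotent $e$ with $e\overline{R}e\cong M_2(S)$, which is impossible since corners of $2$-UQ rings are $2$-UQ (Lemma~\ref{corner}) while $M_2(S)$ never is (Lemma~\ref{matrix}); reducedness then gives abelianness. Second, for an arbitrary $a$ with $a-a^3\neq 0$, semi-potency yields an idempotent $e=(a-a^3)b$, abelianness rewrites this as $e=ea\cdot e(1-a^2)\cdot eb$, Dedekind-finiteness (Lemma~\ref{dedkind finite}) promotes $ea$ and $e(1-a^2)$ to units of $e\overline{R}e$, and the identity $(ea)^2+e(1-a^2)=e$ contradicts Lemma~\ref{sum two unit} in the $2$-UQ corner $e\overline{R}e$. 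This is precisely the pair of units you were searching for --- but it is manufactured from a general element violating tripotency, not from the quasi-nilpotent $\overline{u}^2-\overline{1}$, which is why the paper's route succeeds where yours stalls. Finally, $(2)\Rightarrow(4)$ is then a one-line deduction ($u-u^3\in J(R)$ and $u\in U(R)$ give $1-u^2\in J(R)$), so the Cui--Yin-type statement you wanted to reprove is not needed at all.
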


\begin{proof}
Implications (4) $\Rightarrow$ (3) $\Rightarrow$ (1) are quite obvious, so we omit the details.

(1) $\Rightarrow$ (2): We first show that the quotient $R/J(R)$ is reduced. Suppose on the contrary that $x^2=0$ with $0 \neq x \in R/J(R)$. Since $R$ is semi-potent, $R/J(R)$ is also semi-potent. Thus, viewing \cite[Theorem 2.1]{lev}, there exists $e \in R/J(R)$ such that $eR/J(R)e \cong M_2(S)$, where $S$ is a non-zero ring.

As $R/J(R)$ is a 2-UQ ring, Lemma \ref{corner} works to get that $eR/J(R)e$ is too a 2-UQ ring and, consequently, $M_2(S)$ is a 2-UQ ring as well. However, this contradicts Lemma \ref{matrix}. Therefore, $R/J(R)$ is reduced, as desired.

Now, suppose there exists $a \in R/J(R)$ such that $a-a^3 \neq 0$ in $R/J(R)$. Since $R/J(R)$ is semi-potent, there exists $e=e^2 \in R/J(R)$ with $e \in (a-a^3)R/J(R)$. We write $e=(a-a^3)b$ for some $b \in R/J(R)$.

But, as $R/J(R)$ is abelian, we also deduce that
\begin{equation*}
e = ea(1-a^2)b = e(1-a^2)ab.
\end{equation*}
Thus, Lemma \ref{dedkind finite} applies to infer that $ea, e(1-a^2) \in U(eR/J(R)e)$.

On the other hand, one verifies that
\begin{equation*}
(ea)^2 + e(1-a^2) = e. \qquad (*)
\end{equation*}
Since $R/J(R)$ is a 2-UQ ring, Lemma \ref{corner} is again applicable to find that $eR/J(R)e$ also is 2-UQ. Finally, equation $\left( \ast \right)$ contradicts Lemma \ref{sum two unit}, and hence $R/J(R)$ is tripotent.

(2) $\Rightarrow$ (4): Let $u \in U(R)$. Then, by (2), we have $u-u^3 \in J(R)$. Moreover, since $J(R)$ is an ideal and $u$ is a unit, we receive $1-u^2 \in J(R)$. Therefore, $R$ is a 2-UJ ring, as asserted.
\end{proof}

A ring $R$ is known to be {\it $\pi$-regular} when, for every element $a \in R$, there exists $n \geq 1$ such that $a^n$ lies in the principal ideal $a^nRa^n$, thus generalizing the notion of a regular ring in which this relation holds for $n=1$. Strengthening this concept, a ring $R$ is called {\it strongly $\pi$-regular} if, for each $a \in R$, we have $a^n \in a^{n+1}R$ for some $n \geq 1$. Likewise, $R$ is {\it reduced} if it contains no non-zero nilpotent elements, that is, $Nil(R) = (0)$.

\medskip

We thus extract the following consequence.

\begin{corollary}
The following five statements are equivalent for any 2-UQ ring \(R\):

(1) $R$ is a regular ring.

(2) $R$ is a $\pi$-regular reduced ring.

(3) $R$ is a strongly regular ring.

(4) $R$ is a unit-regular ring.

(5) $R$ is a tripotent ring.
\end{corollary}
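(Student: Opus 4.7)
The plan is to reduce all four equivalences to the tripotence criterion \textup{(5)}, and to extract tripotence from the hypotheses via Theorem~\ref{semipotent}. The key observation is that each of conditions (1), (3), (4) forces $R$ to be both semi-potent and to satisfy $J(R)=0$, while condition (2) does the same after one additional classical step; once these two properties are in hand, Theorem~\ref{semipotent} (applied to the 2-UQ ring $R/J(R)=R$) delivers tripotence for free.

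I would first establish the ``downward'' chain $(5)\Rightarrow(3)\Rightarrow(4)\Rightarrow(1)$ and the implication $(5)\Rightarrow(2)$, none of which uses the 2-UQ hypothesis. The step $(5)\Rightarrow(3)$ is immediate from $a=a^{3}=a^{2}\cdot a$, so that $a\in a^{2}R$. The step $(3)\Rightarrow(4)$ is the well-known fact that strongly regular rings are unit-regular (idempotents being central, one writes $a=a^{2}b$, sets $e=ab$, and checks that $u=e b+(1-e)$ is a unit with $a=aua$). The step $(4)\Rightarrow(1)$ is a tautology. For $(5)\Rightarrow(2)$, tripotence entails regularity (hence $\pi$-regularity) and reducedness, since $a^{2}=0$ forces $a=a^{3}=a\cdot a^{2}=0$.

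Next, I would close the loop by showing that each of (1), (3), (4) implies (5). It suffices to argue this for (1), since (3) and (4) are stronger. If $R$ is regular, then $J(R)=0$: for $a\in J(R)$ one has $a=axa$, so $a(1-xa)=0$, and $1-xa\in U(R)$ forces $a=0$. Also $R$ is semi-potent, since for any nonzero $a$ the element $ax$ (where $a=axa$) is a nonzero idempotent contained in the relevant one-sided ideal. Thus Theorem~\ref{semipotent}, applied to the semi-potent 2-UQ ring $R$, yields that $R/J(R)=R$ is tripotent, i.e.\ (5).

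Finally, for $(2)\Rightarrow(5)$, I would invoke the classical fact that a reduced $\pi$-regular ring is strongly regular (centrality of idempotents in a reduced ring allows one to upgrade $a^{n}=a^{n}xa^{n}$ to an expression $a=a^{2}y$), which brings us back to case (3) and hence to (5). The only real obstacle is confirming that Theorem~\ref{semipotent} is legitimately applicable, that is, that $R$ is semi-potent and that $R/J(R)$ is itself 2-UQ; but since in every case $J(R)=0$, the latter is automatic and the former follows directly from regularity, so the corollary is essentially a bookkeeping consequence of Theorem~\ref{semipotent}.
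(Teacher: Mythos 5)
Your proposal is correct, and it leans on the same pivotal ingredient as the paper --- Theorem~\ref{semipotent} applied to the semi-potent ring $R$ with $J(R)=0$ to get tripotence from regularity --- but the surrounding decomposition is genuinely different. The paper proves $(1)\Leftrightarrow(2)$ head-on: for $(1)\Rightarrow(2)$ it re-runs the Levitzki argument (a nonzero square-zero element in a semi-potent ring produces a corner isomorphic to $M_2(S)$, contradicting Lemma~\ref{matrix}) to show a regular 2-UQ ring is reduced, and for $(2)\Rightarrow(1)$ it cites Badawi's theorem on abelian $\pi$-regular rings; it then derives $(3)$ from reducedness and gets $(5)$ from Theorem~\ref{semipotent} as you do. You instead close a single cycle $(5)\Rightarrow(3)\Rightarrow(4)\Rightarrow(1)\Rightarrow(5)$ together with $(5)\Rightarrow(2)\Rightarrow(3)$, extracting reducedness for free from tripotence ($a^2=0\Rightarrow a=a^3=0$) rather than from Levitzki, and replacing the Badawi citation by the classical fact that a reduced $\pi$-regular ring is strongly regular (which is essentially the same result). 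The net effect is that your route uses the 2-UQ hypothesis in exactly one place (the implication $(1)\Rightarrow(5)$ via Theorem~\ref{semipotent}), whereas the paper invokes it twice (once more inside its direct proof of $(1)\Rightarrow(2)$); your organization is the more economical of the two, at the cost of making the equivalence of $(1)$ and $(2)$ visible only after traversing the whole cycle.
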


\begin{proof}
(1) $\Rightarrow$ (2): Since \( R \) is a regular ring, \( J(R) = (0) \). Moreover, \( R \) is a semi-potent ring, because, for every \( 0 \neq a \in R \), there exists \( b \in R \) such that \( a = aba \). Obviously, \( e := ab \in aR \) is an idempotent. We now show that \( R \) is reduced. Assume, on the contrary, that there exists a non-zero element \( a \in R \) such that \( a^2 = 0 \). Applying \cite[Theorem 2.1]{lev}, there exists an idempotent \( e \in RaR \) such that \( eRe \cong M_2(S) \) for some non-trivial ring \( S \). Since \( R \) is a 2-UQ ring, Lemma \ref{corner} employs to confirm that \( eRe \) is a 2-UQ ring, too. Thus, \( M_2(S) \) has to be a 2-UQ ring, as well, which contradicts Lemma \ref{matrix}. Therefore, \( a = 0 \), showing that \( R \) is a reduced ring.

(2) $\Rightarrow$ (1): This follows automatically from \cite[Theorem 3]{badw}.

(1) $\Rightarrow$ (3): Taking in mind (1) $\Rightarrow$ (2), we may assume that $R$ is reduced (and hence abelian). Therefore, $R$ is a strongly regular ring.

(3) $\Rightarrow$ (4): This is always true bearing into account \cite[Exercise 12.6C]{lamex}).

(4), (5) $\Rightarrow$ (1): These are self-evident.

(1) $\Rightarrow$ (5): Since every regular ring is, itself, semi-potent and $J(R)=(0)$, the wanted result follows at once from Theorem \ref{semipotent}.
\end{proof}

Now, we have at our disposal everything to prove our next principal result in this section.

\begin{theorem}\label{exe 2}
Let $R$ be a potent ring. Then, the following six conditions are equivalent:

(1) $R$ is a 2-$UQ$ ring.

(2) $R/J(R)$ is a 2-$UQ$ ring.

(3) $R/J(R)$ is a tripotent ring.

(4) $R$ is a 2-$UJ$ ring.

(5) $R/J(R)$ is a 2-$UJ$ ring.

(6) $R/J(R)$ is a 2-$UU$ ring.
\end{theorem}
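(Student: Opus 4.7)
The plan is to derive the full chain of equivalences by combining Theorem~\ref{semipotent} with Lemma~\ref{exe}. Since every potent ring is in particular semi-potent, Theorem~\ref{semipotent} immediately supplies $(2) \Leftrightarrow (3) \Leftrightarrow (4) \Leftrightarrow (6)$ for free, so the only work is to attach $(1)$ and $(5)$ to this existing equivalence class.

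The routine implications dispose of $(5)$ and $(1)$: the inclusion $J(R)\subseteq QN(R)$ gives $(4) \Rightarrow (1)$ at once; the implication $(4) \Rightarrow (5)$ follows because the map $U(R) \to U(R/J(R))$ is surjective, so lifting any $\overline{u} \in U(R/J(R))$ to a unit $u \in U(R)$ and applying $2$-UJ-ness of $R$ gives $\overline{u}^2 - \overline{1} = \overline{0}$, which trivially lies in $J(R/J(R))$; and $(5) \Rightarrow (6)$ is automatic because $J(R/J(R)) = 0 \subseteq Nil(R/J(R))$.

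The only substantive new implication is $(1) \Rightarrow (3)$, and I would prove it by mirroring the proof of $(1) \Rightarrow (2)$ in Theorem~\ref{semipotent}, using Lemma~\ref{exe} (which is tailored to potent $2$-UQ rings) to supply the structural information about corners of $\overline{R} := R/J(R)$ that in Theorem~\ref{semipotent} came from assuming $\overline{R}$ was itself $2$-UQ. First I would show $\overline{R}$ is reduced: if $\overline{x}^2 = \overline{0}$ with $\overline{x} \neq \overline{0}$, then since $\overline{R}$ is semi-potent, \cite[Theorem 2.1]{lev} produces an idempotent $\overline{e} \in \overline{R}$ with $\overline{e}\,\overline{R}\,\overline{e} \cong M_2(S)$ for some non-zero ring $S$, directly contradicting Lemma~\ref{exe}(2). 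So $\overline{R}$ is reduced, hence abelian and Dedekind-finite. Next, assuming some $\overline{a} \in \overline{R}$ satisfies $\overline{a} - \overline{a}^3 \neq \overline{0}$, semi-potency supplies an idempotent $\overline{e} = (\overline{a} - \overline{a}^3)\overline{b}$, and abelianness rewrites this as $\overline{e} = (\overline{e}\,\overline{a})(\overline{e}(1-\overline{a}^2)\overline{b}) = (\overline{e}(1-\overline{a}^2))(\overline{e}\,\overline{a}\,\overline{b})$. Dedekind-finiteness of the reduced corner $\overline{e}\,\overline{R}\,\overline{e}$ then places both $\overline{e}\,\overline{a}$ and $\overline{e}(1-\overline{a}^2)$ in $U(\overline{e}\,\overline{R}\,\overline{e})$, whereupon the identity $(\overline{e}\,\overline{a})^2 + \overline{e}(1-\overline{a}^2) = \overline{e}$ contradicts Lemma~\ref{exe}(1). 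So $\overline{R}$ is tripotent and $(3)$ holds.

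The main obstacle is this last implication, because $2$-UQ-ness is not obviously inherited by $R/J(R)$ from $R$: a quasi-nilpotent element of $R$ need not remain quasi-nilpotent after passing to the quotient, since a commuting partner modulo $J(R)$ need not lift to a commuting partner in $R$. Lemma~\ref{exe} sidesteps this difficulty by translating the potent $2$-UQ hypothesis on $R$ directly into the restrictions on matrix-unit structure and unit identities in $\overline{R}$ that the tripotency argument needs. This is also precisely why the hypothesis must be strengthened from semi-potent (as in Theorem~\ref{semipotent}) to potent: idempotents must lift so that Lemma~\ref{exe} can be applied.
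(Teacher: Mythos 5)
Your proposal is correct and follows essentially the same route as the paper: the substantive implication $(1)\Rightarrow(3)$ is proved exactly as in the paper, via reducedness of $R/J(R)$ from Levitzki's theorem together with Lemma~\ref{exe}(2), followed by the unit identity $(\overline{e}\overline{a})^2+\overline{e}(1-\overline{a}^2)=\overline{e}$ contradicting Lemma~\ref{exe}(1). The only cosmetic difference is that you attach condition $(5)$ by the elementary chain $(4)\Rightarrow(5)\Rightarrow(6)$, whereas the paper cites Cui--Yin for $(4)\Leftrightarrow(5)$; both are valid.
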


\begin{proof}
One sees that Theorem \ref{semipotent} establishes the equivalencies (2) $\Leftrightarrow$ (3) $\Leftrightarrow$ (4) $\Leftrightarrow$ (6), whereas \cite[Lemma 2.5.]{cui} proves the equivalence (4) $\Leftrightarrow$ (5). Finally, the implication (4) $\Rightarrow$ (1) is immediate, thus leaving only (1) $\Rightarrow$ (3) to be verified in what follows.

(1) $\Rightarrow$ (3): Since $R$ is a semi-potent ring, $\overline{R} = R/J(R)$ is also semi-potent. We will show that $\overline{R}$ is a reduced ring. Suppose the opposite, namely that $x^2 = 0$ but $0 \neq x \in \overline{R}$. Then, consulting with \cite[Theorem 2.1]{lev}, there exists $\bar{e} \in \overline{R}$ such that $\bar{e}\overline{R}\bar{e} \cong M_2(S)$, where $S$ is a non-zero ring. This, however, contradicts Lemma \ref{exe}(2). Hence, $\overline{R}$ is reduced, as claimed.

Now, suppose there exists $a \in \overline{R}$ with $a - a^3 \neq 0$ in $\overline{R}$. Since $\overline{R}$ is semi-potent, there exists $0\neq e = e^2 \in \overline{R}$ such that $e \in (a - a^3)\overline{R}$. Thus, $e = (a - a^3)b$ for some $b \in \overline{R}$. Since $e$ is central, we have $e = ea \cdot e(1-a^2) \cdot ey$, so both $ea, e(1-a^2) \in U(e\overline{R}e)$. Consequently, $(ea)^2 + e(1-a^2) = e$, contradicting Lemma \ref{exe}(1). Therefore, $\overline{R}$ is tripotent, as asserted.
\end{proof}

Referring to  Ko\c{s}an {\it et al.} \cite{kosanz}, an element of a ring is called {\it semi-tripotent} if it can be expressed as the sum of a tripotent element and an element from its Jacobson radical. So, a ring is called {\it semi-tripotent} whenever every element is semi-tripotent, which is equivalent to the conditions that $R/J(R)$ is tripotent and idempotents lift modulo $J(R)$.

\medskip

In what follows, we present several simple but substantial consequences of Theorem \ref{exe 2}, starting with the following.

\begin{corollary}
Let $R$ be a ring. Then, $R$ is a 2-UQ clean ring if, and only if, $R$ is semi-tripotent.
\end{corollary}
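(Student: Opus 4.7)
The plan is to push both directions through Theorem~\ref{exe 2}, using only the standard fact that cleanness is controlled by cleanness of $R/J(R)$ together with liftability of idempotents.

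In the forward direction, assuming $R$ is both 2-UQ and clean, I would first note that every clean ring is exchange and hence semi-potent, and that in a clean ring idempotents always lift modulo $J(R)$; together these make $R$ potent. The equivalence (1)$\,\Leftrightarrow\,$(3) in Theorem~\ref{exe 2} then immediately upgrades the 2-UQ hypothesis to the tripotency of $R/J(R)$, and combined with the already-available lifting of idempotents this is exactly the reformulation of semi-tripotent rings recalled just before the statement.

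For the converse I would unpack the definition of semi-tripotent into: $R/J(R)$ is tripotent and idempotents lift modulo $J(R)$. Since a tripotent element $a$ satisfies $a = a^2 \cdot a$, a tripotent ring is strongly regular and therefore clean, so $R/J(R)$ is clean. To lift this cleanness to $R$, I would take any $a \in R$ and a clean decomposition $\bar a = \bar e + \bar u$ in $R/J(R)$, lift $\bar e$ to an idempotent $e \in R$, and observe that $a - e$ reduces to the unit $\bar u$, so that $a - e \in U(R)$ because units always lift modulo $J(R)$ (using $1 + J(R) \subseteq U(R)$). Hence $R$ is clean, and in particular potent, so Theorem~\ref{exe 2} applied the other way — (3)$\,\Rightarrow\,$(1) with $R$ potent — delivers the 2-UQ property and closes the argument.

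The only real work is the cleanness-lifting step in the converse direction; everything else is bookkeeping tied to Theorem~\ref{exe 2}. I expect no substantial obstacle beyond citing the standard lifting of clean decompositions along $R \twoheadrightarrow R/J(R)$.
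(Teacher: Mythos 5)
Your argument is correct, and the forward direction coincides with the paper's: clean implies exchange, hence potent, so Theorem~\ref{exe 2} turns the 2-UQ hypothesis into tripotency of $R/J(R)$, and the lifting of idempotents supplied by cleanness gives semi-tripotency. Where you diverge is in the converse. You first establish that $R$ is clean (tripotent quotient $\Rightarrow$ strongly regular $\Rightarrow$ clean quotient, then lift clean decompositions along $R \twoheadrightarrow R/J(R)$ using liftable idempotents and $1+J(R)\subseteq U(R)$), and then feed the resulting potency back into Theorem~\ref{exe 2} via (3)$\,\Rightarrow\,$(1) to conclude 2-UQ. The paper proves cleanness the same way, but obtains the 2-UQ property by a direct computation instead: writing a unit $u$ as $e+j$ with $e$ tripotent and $j\in J(R)$, it observes that $e=u-j$ is a unit, so the idempotent $e^2$ equals $1$, whence $u^2 = 1 + (ej+je+j^2) \in 1+J(R) \subseteq 1+QN(R)$. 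Your route is more economical in that it reuses the main theorem symmetrically in both directions; the paper's computation is self-contained and actually yields the sharper conclusion that a semi-tripotent ring is 2-UJ, not merely 2-UQ. Both are valid, and your cleanness-lifting step is the standard argument the paper delegates to \cite[Theorem 1.1.2]{chenbook}.
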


\begin{proof}
Regarding Theorem \ref{exe 2}, it follows that every clean 2-UQ ring is semi-tripotent.

Conversely, assume $R$ is semi-tripotent. Choose $u \in U(R)$ and let $u = e + j$ be a semi-tripotent representation of $u$. Then, one finds that
\[
e = u - j \in U(R) + J(R) \subseteq U(R) \Rightarrow e^2 \in U(R) \cap \text{Id}(R) = \{1\}
\]
and, consequently,
\[
u^2 = e^2 + (ej + je + j^2) \in 1 + J(R) \subseteq 1 + QN(R).
\]
We now peove that $R$ is a clean ring. Indeed, since $R/J(R)$ is tripotent, it follows that $R/J(R)$ is strongly regular and, consequently, employing \cite[Theorem 1.1.2]{chenbook}, it is clean. Moreover, with the help of the definition of semi-tripotent rings, idempotents lift modulo $J(R)$. Therefore, $R$ is a clean ring, as stated.
\end{proof}

A significant consequence is the following.

\begin{corollary}
Let $R$ be an artinian (in particular, a finite) ring. Then, the following three conditions are equivalent:

(1) $R$ is a 2-$UQ$ ring.

(2) $R$ is a 2-$UJ$ ring.

(3) $R$ is a 2-$UU$ ring.
\end{corollary}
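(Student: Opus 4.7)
The plan is to close a short cycle of implications $(3) \Rightarrow (1) \Rightarrow (2) \Rightarrow (3)$, exploiting two features of Artinian rings that are not available in general: artinian rings are potent, and their Jacobson radical is nilpotent.

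First I would dispose of $(3) \Rightarrow (1)$, which requires no hypothesis on $R$ at all: since $\mathrm{Nil}(R) \subseteq QN(R)$, writing $u^{2}=1+n$ with $n$ nilpotent automatically gives the required $QN$-representation.

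Next, for $(1) \Rightarrow (2)$, I would invoke Theorem~\ref{exe 2}. An Artinian ring is semi-perfect, and in particular it is semi-potent with idempotents lifting modulo $J(R)$, i.e.\ it is potent. Hence the equivalence of conditions (1) and (4) in Theorem~\ref{exe 2}, namely 2-UQ $\Leftrightarrow$ 2-UJ under the potent hypothesis, applies directly and gives what we need.

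Finally, for $(2) \Rightarrow (3)$, I would use the classical Hopkins--Levitzki fact that $J(R)$ is nilpotent when $R$ is Artinian; in particular $J(R) \subseteq \mathrm{Nil}(R)$. Thus, given $u \in U(R)$ with $u^{2}=1+j$ for some $j \in J(R)$, the element $j$ is nilpotent and we conclude $u^{2} \in 1+\mathrm{Nil}(R)$, so $R$ is 2-UU. No real obstacle is expected: the entire argument is a bookkeeping exercise that simply records which standard properties of Artinian rings (potency, nilpotency of $J(R)$) let us upgrade the potent-ring equivalences of Theorem~\ref{exe 2} to include 2-UU on the nose rather than only modulo $J(R)$. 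The parenthetical case of a finite ring follows because every finite ring is Artinian.
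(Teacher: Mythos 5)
Your proposal is correct, and its core step --- reducing (1) $\Leftrightarrow$ (2) to Theorem~\ref{exe 2} by observing that an Artinian ring is potent --- is exactly the paper's strategy (the paper gets potency via cleanness of Artinian rings, citing Camillo--Yu, while you get it via semiperfectness; both are standard and equivalent for this purpose). Where you genuinely diverge is in handling the 2-UU condition. The paper stays at the level of the quotient: it invokes Theorem~\ref{exe 2}(6) (that $R/J(R)$ is 2-UU) together with an external lemma from Sheibani--Chen stating that, under the Artinian hypothesis $J(R)\subseteq \mathrm{Nil}(R)$, a ring is 2-UU precisely when its quotient modulo $J(R)$ is. You instead close the cycle $(3)\Rightarrow(1)\Rightarrow(2)\Rightarrow(3)$ with a direct elementwise argument: $(3)\Rightarrow(1)$ is the trivial inclusion $\mathrm{Nil}(R)\subseteq QN(R)$, and $(2)\Rightarrow(3)$ follows because $u^{2}=1+j$ with $j\in J(R)\subseteq \mathrm{Nil}(R)$ is already a 2-UU representation. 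Your route is more elementary and self-contained --- it needs only the nilpotency of $J(R)$ and no lifting lemma for the 2-UU property --- while the paper's route is more uniform in that it channels everything through the list of equivalences in Theorem~\ref{exe 2}. Both are valid; yours is arguably the cleaner bookkeeping.
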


\begin{proof}
Our argument treats, specifically, artinian rings, because finite rings constitute a proper subclass of artinian rings. So, from \cite[Corollary 6]{camilo}, it follows that all artinian rings possess the clean property. This ensures, via Corollary \ref{exe 2}, that $R$ is 2-$UQ$ if, and only if, it is 2-$UJ$.

The artinian hypothesis further guarantees the containment $J(R) \subseteq N(R)$. Likewise, reference to \cite[Lemma 2.1]{10} establishes that $R$ meets the 2-$UU$ condition precisely when its quotient $R/J(R)$ does. Combining these observations through Corollary \ref{exe 2} yields the equivalence between $R$ being 2-$UJ$ and 2-$UU$, finishing the proof.
\end{proof}

We over this section with the following exhibitions.

\begin{example}
Here, we manage to present certain variations for the combination of 2-UQ rings with rings such as potent, semi-potent, regular and artinian, respectively.

(1) We know that the property of being a 2-UQ ring and a potent ring is tantamount to the property of a ring $R$ for which \( R/J(R) \) is tripotent. Therefore, these rings are equivalent to being semi-tripotent rings. Hence, rings such as \( \mathbb{Z}_2[[x]] \), \( \mathbb{Z}_3[[x]] \), \( T_n(\mathbb{Z}_2) \) and \( T_n(\mathbb{Z}_3) \), where \( n \in \mathbb{N} \), all have this property.\\
(2) Also, for combining 2-UQ and semi-potent rings, one can write an argument similar to (1).\\
(3) We know for a ring that being simultaneously regular and 2-UQ is equivalent to the tripotent property. Therefore, these rings are sub-direct products of rings of types \( \mathbb{Z}_2 \)'s and \( \mathbb{Z}_3 \)'s. Hence, rings such as \( \prod_{i \in I} \mathbb{Z}_2 \), \( \prod_{i \in I} \mathbb{Z}_3 \) and \( \prod_{i \in I} \mathbb{Z}_2 \times \prod_{j \in J} \mathbb{Z}_3 \), where the size can be both finite and infinite, all have this property.\\
(4) Artinian rings that are 2-UQ are equivalent to being 2-UU rings. Therefore, rings such as \( T_n(\mathbb{Z}_2) \) and \( T_n(\mathbb{Z}_3) \), where \( n \in \mathbb{N} \), all have this property.
\end{example}

\section{2-UQ Group Rings}

In this section, we explore the behavior of the 2-UQ property within the context of group rings. By examining specific conditions on the underlying groups and utilizing the structural properties of augmentation ideals, we offer deeper insights into how the 2-UQ framework interacts with group-theoretic constructions of the former group.

Let $R$ be any ring and $G$ any group. We denote by $RG$ the {\it group ring} of $G$ over $R$. The { \it augmentation ideal} $\Delta(RG)$ is standardly defined as the kernel of the augmentation homomorphism $\varepsilon: RG\to R$, where $\varepsilon(\sum_{g\in G}a_g g) = \sum_{g\in G}a_g$.

Imitating the traditional terminology, we say that a group $G$ is a {\it $p$-group} if the order of every element of $G$ is a power of the prime number $p$. Moreover, a group $G$ is said to be {\it locally finite} if each finitely generated subgroup is finite.

\medskip

We first establish two foundational results:

\begin{lemma}\label{4.14} \cite[Lemma 2]{26}.
For a prime $p\in J(R)$ and a locally finite $p$-group $G$, the augmentation ideal satisfies the inclusion $\Delta(RG) \subseteq J(RG)$.
\end{lemma}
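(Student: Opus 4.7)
The plan is to reduce from the locally finite $p$-group $G$ to a finite $p$-subgroup $H$, and then induct on $|H|$, exploiting the fact that every nontrivial finite $p$-group has a nontrivial center.

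\textbf{Reduction to finite subgroups.} An arbitrary $x \in \Delta(RG)$ has finite support and therefore lies in $\Delta(RH)$ for some finitely generated subgroup $H \leq G$; by local finiteness, $H$ is finite, hence a finite $p$-group. The condition $x \in J(RG)$ amounts to the invertibility of $1 - xr$ in $RG$ for every $r \in RG$. Enlarging $H$ to a finite $p$-subgroup $H'$ that contains the supports of both $x$ and $r$ reduces the question to whether $1 - xr \in U(RH')$. Consequently, the lemma is equivalent to the statement that $\Delta(RH) \subseteq J(RH)$ for every finite $p$-subgroup $H \leq G$.

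\textbf{Induction on $|H|$.} The case $|H| = 1$ is vacuous. For $|H| > 1$, pick a central element $z \in Z(H)$ with $z^p = 1$, set $N = \langle z \rangle$, and consider the canonical surjection $\pi : RH \twoheadrightarrow R(H/N)$, whose kernel is the two-sided ideal $I := (z-1)RH \subseteq \Delta(RH)$. Since $\pi$ restricts to a surjection $\Delta(RH) \twoheadrightarrow \Delta(R(H/N))$, and the inductive hypothesis gives $\Delta(R(H/N)) \subseteq J(R(H/N))$, lifting Jacobson membership through $\pi$ reduces the task to showing $I \subseteq J(RH)$. Because $z$ is central and $z^p = 1$, the freshman identity modulo $p$ yields
$$(z-1)^p \;\equiv\; z^p - 1 \;=\; 0 \pmod{pRH},$$
so $(z-1)^p \in pRH$. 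The hypothesis $p \in J(R)$ together with finiteness of $H$ gives the standard inclusion $pRH \subseteq J(RH)$, hence $(z-1)^p \in J(RH)$. Centrality of $z-1$ then forces the whole ideal $I$ it generates to be nilpotent modulo $J(RH)$ and therefore contained in $J(RH)$, closing the induction.

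\textbf{Main obstacle.} The one step that is not purely formal is the inclusion $pRH \subseteq J(RH)$ for finite $H$, which uses that $p$ is central in $R$, belongs to $J(R)$, and $RH$ is a free $R$-module of finite rank: invertibility of $1 + py$ for $y \in RH$ follows by a Nakayama/quasi-regularity argument in the finitely many coordinates indexed by $H$. All other ingredients---the binomial simplification, identification of $\ker\pi$, and lifting of radical membership through a surjection with kernel inside $J$---are routine manipulations once this inclusion is in hand.
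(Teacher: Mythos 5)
Your proof is correct. Note, however, that the paper gives no proof of this statement at all: it is imported verbatim as \cite[Lemma 2]{26} (Zhou, \emph{On clean group rings}), so there is nothing internal to compare against. Your argument is essentially the standard one for this classical fact: reduce to finite $p$-subgroups via finite supports, then induct on $|H|$ using a central element $z$ of order $p$ (which exists since a nontrivial finite $p$-group has nontrivial center), the congruence $(z-1)^p\equiv z^p-1\pmod{pRH}$, and the inclusion $pRH\subseteq J(R)\,RH\subseteq J(RH)$ for finite $H$. Each step checks out: the kernel of $RH\to R(H/\langle z\rangle)$ is indeed the central ideal $(z-1)RH$; $(z-1)^p\in J(RH)$ plus centrality gives $I^p\subseteq J(RH)$, hence $I\subseteq J(RH)$ since $RH/J(RH)$ has no nonzero nilpotent ideals; and radical membership lifts through a surjection whose kernel lies in $J$. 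The one step you rightly flag as non-formal, $pRH\subseteq J(RH)$, is justified by viewing right multiplication by $1-py$ as an $R$-endomorphism of the free module $R^{|H|}$ whose matrix is $I-A$ with $A\in M_n(J(R))=J(M_n(R))$; your sketch of this is adequate.
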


\begin{proposition}
(1) If $RG$ is a 2-$UQ$ ring, then $R$ is also a 2-$UQ$ ring.

(2) If $R$ is a 2-$UQ$ ring and $G$ is a locally finite $p$-group, where $p$ is a prime number such that $p \in J(R)$, then $RG$ is a 2-$UQ$ ring.
\end{proposition}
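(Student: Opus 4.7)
The plan is to dispatch (1) by exhibiting $R$ as a good subring of $RG$ (so that Lemma~\ref{subring} applies) and to dispatch (2) by combining the augmentation ideal lemma of Connell--type (Lemma~\ref{4.14}) with the lifting result Corollary~\ref{factor UQ}.

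For part (1), I would view $R$ as a subring of $RG$ via the standard embedding $r \mapsto r\cdot 1_G$. The only non-trivial thing to verify is that this inclusion is a local homomorphism, i.e., that $U(RG) \cap R \subseteq U(R)$. Here the augmentation homomorphism $\varepsilon \colon RG \to R$ does the work: if $r \in R$ is invertible in $RG$ with inverse $s \in RG$, then applying $\varepsilon$ to $rs = sr = 1$ gives $r\cdot \varepsilon(s) = \varepsilon(s)\cdot r = 1$ in $R$, so $r \in U(R)$. Hence $R$ is a good subring of $RG$, and Lemma~\ref{subring} immediately yields that $R$ inherits the 2-UQ property from $RG$.

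For part (2), the strategy is to quotient by the augmentation ideal and lift. Since $G$ is a locally finite $p$-group with $p \in J(R)$, Lemma~\ref{4.14} guarantees that $\Delta(RG) \subseteq J(RG)$. Next, the augmentation map $\varepsilon$ induces the ring isomorphism
\[
RG/\Delta(RG) \cong R,
\]
which by hypothesis is a 2-UQ ring. With $I := \Delta(RG)$ sitting inside $J(RG)$ and the quotient $RG/I$ being 2-UQ, Corollary~\ref{factor UQ} applies directly and produces the conclusion that $RG$ itself is a 2-UQ ring.

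The arguments are essentially structural, so there is no real computational obstacle; the only place where one has to be careful is the good-subring verification in part (1), since an arbitrary subring inclusion need not preserve invertibility (as the polynomial-versus-power-series example at the start of Section~2 illustrates). The augmentation map is the standard tool that resolves this, and once this is in place both implications follow cleanly from results already developed in the paper.
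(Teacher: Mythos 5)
Your proposal is correct and follows essentially the same route as the paper: part (1) via the good-subring embedding of $R$ into $RG$ together with Lemma~\ref{subring}, and part (2) via Lemma~\ref{4.14} and the isomorphism $RG/\Delta(RG)\cong R$ combined with Corollary~\ref{factor UQ}. Your explicit verification that the inclusion $R\hookrightarrow RG$ is a local homomorphism using the augmentation map is a welcome detail the paper leaves implicit.
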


\begin{proof}
(1) Since $R$ embeds naturally in $RG$, Lemma \ref{subring} forces that $R$ inherits the 2-$UQ$ property via $RG$.

(2) Observe that Lemma \ref{4.14} establishes the containment $\Delta(RG)\subseteq J(RG)$. Now, using the isomorphism $RG/\Delta(RG) \cong R$ combined with Corollary \ref{factor UQ}, we deduce the conclusion.
\end{proof}

An interesting converse relationship is the following one.

\begin{proposition} \label{group ring 2 in Jacobson}
For a group ring $RG$ with 2-$UQ$ property and $2\in J(R)$, the group $G$ must be a 2-group.
\end{proposition}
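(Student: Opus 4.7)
The plan is to argue by contradiction. If $G$ is not a $2$-group, then some element has order not a power of $2$, and by passing to an appropriate power we may select $g\in G$ of odd prime order $p$. Since $g\in U(RG)$ and $RG$ is 2-UQ, we have $g^2=1+q$ with $q:=g^2-1\in QN(RG)$, and the goal is to exploit the quasi-nilpotence of $q$ against the hypothesis $2\in J(R)$.

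The technical heart will be to locate an auxiliary element $c\in R\langle g\rangle$ satisfying $(g^2-1)c=\alpha-p$, where $\alpha:=1+g+g^2+\cdots+g^{p-1}$. Such a $c$ exists because in $R\langle g\rangle\cong R[y]/(y^p-1)$ the augmentation ideal coincides with the principal ideal $(y^2-1)\cdot R[y]/(y^p-1)$: indeed, when $p$ is odd one has $y^p\equiv y\pmod{y^2-1}$, so $y-1$ lies in the ideal generated by $y^p-1$ and $y^2-1$ in $\mathbb{Z}[y]$, whence the augmentation ideal of $R\langle g\rangle$ is principally generated by $y^2-1$; and $\alpha-p$ manifestly has augmentation zero. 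Since $c$ lies in the commutative subring $R\langle g\rangle$ it commutes with $q$, so the defining property of a quasi-nilpotent immediately gives $(p+1)-\alpha=1-qc\in U(RG)$.

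Now let $\gamma\in RG$ denote the two-sided inverse of $(p+1)-\alpha$. Writing out both $((p+1)-\alpha)\gamma=1$ and $\gamma((p+1)-\alpha)=1$ produces the relation $\alpha\gamma=\gamma\alpha$. Left-multiplying the first equation by $\alpha$ and applying the classical identity $\alpha^2=p\alpha$ (the squared class sum of a cyclic group of order $p$) yields $\alpha\gamma=\alpha$. Substituting back leaves the clean equation $(p+1)\gamma=1+\alpha$.

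The contradiction then pops out from a coefficient comparison: because $p$ is odd, $p+1=2\cdot\tfrac{p+1}{2}\in 2R\subseteq J(R)$, so every $G$-coefficient of $(p+1)\gamma$ lies in $J(R)$; yet $1+\alpha=2+g+g^2+\cdots+g^{p-1}$ has coefficient $1$ at the group element $g$, and $1\notin J(R)$. The main obstacle I anticipate is the construction of $c$ (specifically, identifying the augmentation ideal of $R\langle g\rangle$ with $(g^2-1)R\langle g\rangle$ for odd $p$) together with the non-commutative bookkeeping needed to derive $\alpha\gamma=\alpha$ without prematurely assuming $\gamma\in R\langle g\rangle$; once $(p+1)\gamma=1+\alpha$ is in hand, the finish is a one-line coefficient comparison.
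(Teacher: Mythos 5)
Your argument for the torsion case is correct and is genuinely different from (and arguably slicker than) the paper's: the paper first proves $2\in J(RG)$, then shows by an induction on $k$ that $1+g+\cdots+g^{2k}\in U(RG)$ using closure properties of $QN(RG)$ under adding elements of $J(RG)$, and finally multiplies by $1-g$; you instead apply quasi-nilpotence of $q=g^2-1$ to a single well-chosen element $c$ with $(g^2-1)c=\alpha-p$ (legitimate, since for odd $p$ one has $y^p-1\equiv y-1 \pmod{y^2-1}$, so the augmentation ideal of $R\langle g\rangle$ is $(g^2-1)R\langle g\rangle$, and $c$ may be taken in $\mathbb{Z}[g]$, hence commuting with $q$), and then exploit $\alpha^2=p\alpha$ to force $(p+1)\gamma=1+\alpha$ and read off a contradiction from the coefficient of $g$. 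All of those steps check out, including $(p+1)1_R\in 2R\subseteq J(R)$.

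However, there is a genuine gap at the very first reduction. With the paper's definition, $G$ fails to be a $2$-group as soon as some element has order that is not a power of $2$, and this includes elements of \emph{infinite} order; for such an element no power has odd prime order, so your opening sentence ``by passing to an appropriate power we may select $g\in G$ of odd prime order $p$'' is simply false in that case, and your entire construction (which needs $\alpha=1+g+\cdots+g^{p-1}$ and $\alpha^2=p\alpha$) is unavailable. The paper handles exactly this in its Claim~1: it shows every $g\in G$ has finite order by proving $1+g+g^2\in U(RG)$ (via $1-g^2\in QN(RG)$, $2\in J(RG)$, hence $1+g^2\in QN(RG)$) and then deriving a contradiction from a support/degree argument in the Laurent polynomial ring $R\langle g\rangle$. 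You can fill the gap in the same spirit with your own tools --- e.g.\ for $g$ of infinite order, $x=-g^{-1}$ commutes with $q=g^2-1$, so $1-qx=1+g-g^{-1}$ would be a unit of the good subring $R\langle g\rangle\cong R[y,y^{-1}]$, which a comparison of extreme exponents rules out --- but as written the proposal does not cover this case, so the proof is incomplete.
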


\begin{proof}
First, we show that $2 \in J(RG)$. Since $2 \in J(R)$, we have $3 \in U(R) \subseteq U(RG)$. Therefore, $9=3^2=1+QN(RG)$, which means $2^3=8 \in QN(RG)$. According to \cite{badw}, we have $2 \in QN(RG)$ and, since $2 \in C(RG)$, it follows that $2 \in J(RG)$.

We now intend to prove two claims:

\medskip

\textbf{Claim 1:} All group elements have finite order.

\medskip

Since one inspects that $R\langle g\rangle$ is a good subring of $RG$, it follows that $R\langle g\rangle$ is also 2-$UQ$. Without loss of generality, assume the element $g$ to be central. Now, suppose on the reciprocity, $g$ has infinite order. Since $RG$ is a 2-$UQ$ ring, we have $1-g^2 \in QN(RG)$. Given $2 \in J(RG)$, thanking to \cite[Proposition 2.7]{cuit}, we can write $1+g^2 \in QN(RG)$, thus assuring that $1+g+g^2 \in U(RG)$. Therefore, there exist integers $n < m$ and some elements $a_i$ with $a_n \neq 0 \neq a_m$ such that
\[
(1+g+g^2)\sum_{i=n}^{m} a_ig^i=1.
\]
This, however, leads to a contradiction, and thus every element $g \in G$ must have finite order.

\medskip

\textbf{Claim 2:} For any \( g \in G \) and \( k \in \mathbb{N} \), we have \(\sum_{i=0}^{2k} g^i \in U(RG)\).

\medskip

We will demonstrate this statement only for \( k=1, 2 \), as the general case follows by analogy.

For \( k=1 \) and any \( g \in G \), we have \( 1-g^2 \in QN(RG) \), since it can easily be shown that, if \( q \in QN(R) \cap C(R) \), then \( q \in J(R) \). Given that \( 2 \in J(RG) \), we can write \( 1+g^2 \in QN(RG) \) and, consequently, \( 1+g+g^2 \in U(RG) \).

For \( k=2 \) and any \( g \in G \), observing that \( g, g^2 \in U(RG) \), we obtain \( 1-g^2 \in QN(RG) \) and thus \( 1+g^2 \in QN(RG) \). Therefore, \( g+g^3 \in QN(RG) \). Moreover, \( 1-g^4 \in QN(RG) \) and hence \( 1+g^4 \in QN(RG) \).

Furthermore, we obtain \[ 2+g+g^2+g^3+g^4 \in QN(RG) \] and, because \( 2 \in J(RG) \), this means that
\[ g+g^2+g^3+g^4 \in QN(RG) \] i.e., consequently, \[ 1+g+g^2+g^3+g^4 \in U(RG). \]

Continuing this process, we can show that \(\sum_{i=0}^{2k} g^i \in U(RG)\), as claimed.

Now, if \( g \in G \) has order \( p \), where \( p \) does not divide 2, then \( p \) must be odd, that is, \( p-1=2k \). Consequently, by what we have shown above, \(\sum_{i=0}^{2k} g^i \in U(RG)\) and, since \( (1-g)(\sum_{i=0}^{2k} g^i)=0 \), we derive that \( 1-g=0 \), which is a contradiction. Thus, \( G \) must be a 2-group, as stated.
\end{proof}

We are now able to proceed by proving the following major assertion.

\begin{theorem}\label{23}
Let $R$ be a ring with $3 \in J(R)$, and $G$ a $p$-group with $p$ a prime. If $RG$ is 2-UQ, then either $G$ is a 3-group or $G$ is a group of exponent 2.
\end{theorem}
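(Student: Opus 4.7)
The plan is to mirror the strategy of Proposition~\ref{group ring 2 in Jacobson}, substituting the prime $3$ for the prime $2$. First I establish that $3 \in J(RG)$: from $3 \in J(R)$ one has $2 \in U(R) \subseteq U(RG)$, so the 2-UQ condition applied to $2$ produces $4 = 1 + q$ with $q = 3 \in QN(RG)$; being central, $3 \in QN(RG) \cap C(RG) \subseteq J(RG)$. For each $g \in G$ I pass to the good subring $R\langle g\rangle$, which inherits 2-UQ by Lemma~\ref{subring} and in which $3 \in J(R\langle g\rangle)$ (because $J(RG) \cap R\langle g\rangle \subseteq J(R\langle g\rangle)$ for any good subring). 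Since every quasi-nilpotent element produced below is central in $R\langle g\rangle$, I may freely identify central quasi-nilpotents with central elements of $J(R\langle g\rangle)$ and perform ideal arithmetic.

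The heart of the proof is to force $n := |g|$ into $\{3^a : a \geq 0\} \cup \{2 \cdot 3^a : a \geq 0\}$ via three substeps:

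\textbf{(a)} \emph{If $n$ is odd then $g - 1 \in J(R\langle g\rangle)$.} The telescoping identity $(1+g)\sum_{i=0}^{n-1}(-g)^i = 1 + (-1)^{n-1}g^n = 2$ shows $1+g \in U(R\langle g\rangle)$, with inverse $2^{-1}\sum_{i=0}^{n-1}(-g)^i$. Applying 2-UQ to $1+g$ forces $2g + g^2 \in J$, and subtracting the relation $g^2 - 1 \in J$ coming from 2-UQ on $g$ itself gives $2g + 1 \in J$. Adding $-3 \in J$ and dividing by the unit $2$ yields $g - 1 \in J(R\langle g\rangle)$.

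\textbf{(b)} \emph{If $n = p$ is a prime $\neq 3$, contradiction.} Set $j := g - 1 \in J(R\langle g\rangle)$ using (a). Every integer coprime to $3$ is a unit in $R$ (write $n = r + 3q$ with $r \in \{1,2\}$), so $p \in U(R\langle g\rangle)$. Expanding $(1+j)^p = g^p = 1$ via the binomial theorem (valid since $j$ is central) produces $pj = -j^2 t$ for some $t \in R\langle g\rangle$; dividing by $p \in U$ gives $j = j^2 s$ with $s = -p^{-1}t$, so $j(1 - js) = 0$. But $1 - js \in 1 + J \subseteq U$, forcing $j = 0$ and $g = 1$, contradicting $|g| = p \geq 5$.

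\textbf{(c)} \emph{If $n = 4$, contradiction.} Here $j := g^2 - 1 \in J$ by 2-UQ on $g$, and $g^4 = 1$ expands to $(1+j)^2 = 1$, i.e.\ $j(2 + j) = 0$. But $2 + j = 1 + g^2$ is a unit (a unit plus an element of $J$), so $j = 0$, hence $g^2 = 1$, contradicting $|g| = 4$.

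Reducing arbitrary prime-power orders $p^k$ to the prime case via $g \mapsto g^{p^{k-1}}$ (respectively to order $4$ via $g \mapsto g^{2^{k-2}}$ when $p = 2,\ k \geq 2$), (b) and (c) together rule out every element of $G$ whose order is not $1$, $2$, or a power of $3$. Since $G$ is a $p$-group, this forces the dichotomy: if $p = 3$ then $G$ is a $3$-group, and otherwise every nontrivial element has order $2$, so $G$ has exponent $2$. The main technical obstacle is substep (a), which depends on the unithood of $1 + g$ available only for odd $n$; this is precisely why substep (c) must be argued separately for the even order $4$, where the trick $(1+g)\sum (-g)^i = 2$ fails and a direct exploitation of $g^2 - 1$ itself is required.
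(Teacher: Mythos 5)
Your argument is correct, and its skeleton matches the paper's: first force $3\in J(RG)$ from a central quasi-nilpotent, then localize to the good subring $R\langle g\rangle$ so that $g$ is central and central quasi-nilpotents can be pushed into the Jacobson radical, and finally treat even and odd orders separately. Your treatment of the even case is essentially the paper's in disguise: your identity $j(2+j)=0$ with $j=g^2-1$ is exactly the paper's $(1-g^{2^{n-1}})(1+g^{2^{n-1}})=0$ together with the observation that $1+g^{2^{n-1}}=2+j\in U+J$ is a unit. The genuine divergence is in the odd case. The paper reaches $1-g\in QN(RG)$ in one line from $1-g=-g(1-g^{2k})$ (using $g^{2k}=g^{-1}$) and then invokes Connell's theorem \cite[Proposition 15(i)]{coon} on augmentation ideals contained in the radical to conclude that $G$ is a $q$-group with $q\in J(R)$, hence $q=3$. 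You instead obtain $g-1\in J$ by applying the 2-UQ condition to the explicitly invertible element $1+g$ (the telescoping inverse requires $2\in U(R)$, which $3\in J(R)$ supplies) and then eliminate elements of prime order $p\ge 5$ directly via the binomial expansion of $(1+j)^p=1$, using that such $p$ is a unit modulo $J$. Your route is longer to the intermediate fact $g-1\in J$ but entirely self-contained, replacing the external group-ring citation with an elementary cancellation $j=j^2s\Rightarrow j(1-js)=0\Rightarrow j=0$; it also isolates cleanly why $p=3$ escapes (one cannot divide by $3\in J$). The paper's route is shorter but leans on Connell and on \cite[Lemma 3.4]{daoa} for manipulating quasi-nilpotents. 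Two small points worth making explicit if you write this up: the step ``central quasi-nilpotent implies membership in $J$'' deserves a sentence (an ideal $I$ with $1+I\subseteq U$ lies in $J$), and in substep (b) you should note that you are implicitly assuming $p$ odd, since substep (a) is unavailable for $p=2$ — this is consistent with your final assembly but should be stated.
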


\begin{proof}
Following a similar approach to that used in Proposition \ref{group ring 2 in Jacobson}, we can show that $3 \in J(RG)$. Now, suppose $g \in G$. Since, as noticed above, $R\langle g\rangle$ is a good subring of $RG$, we will work with $R\langle g\rangle$ instead of $RG$ in the continuation of the evidence. Therefore, with no harm in generality, we may assume that $G$ is an abelian group. We next consider two possible cases, namely when $p=2$ and when $p \neq 2$.

\medskip

\textbf{Case 1:} If $p=2$, then $G$ is a 2-group. For any $g \in G$, there exists $n \in \mathbb{N}$ such that $g^{2^n}=1$. Let $n>1$ be the smallest such integer. Since $2^{n-1}$ is even, there exists $m \in \mathbb{N}$ with $2^{n-1}=2m$, and so
\[
1-g^{2^{n-1}}=1-g^{2m} \in QN(RG).
\]
Then, since $3 \in J(RG)$, an appeal to \cite[Lemma 3.4(2)]{daoa} gives that
\[
1+2g^{2^{n-1}}=1-g^{2^{n-1}}+3g^{2^{n-1}} \in QN(RG)
\]
and, consequently, due to the centrality of $g$, we obtain:
\[
1+g^{2^{n-1}}=1+2g^{2^{n-1}}-g^{2^{n-1}} \in QN(RG) + U(RG) \subseteq U(RG).
\]
However, $$(1-g^{2^{n-1}})(1+g^{2^{n-1}})=1-g^{2^{n}}=1-1=0,$$ and, since $1+g^{2^{n-1}}\in U(RG)$, we must have $g^{2^{n-1}}=1$, thus contradicting the minimality of $n$. Finally, $n=1$.

\medskip

\textbf{Case 2:} If $p \neq 2$, then $p$ must be odd. Let $g \in G$ with $g^{p^n}=1$. Suppose $p^n=2k+1$. Then, $1-g^{2k} \in QN(RG)$, and since $g$ is central, by \cite[Lemma 3.4(2)]{daoa}, we have
\[
1-g=-g(1-g^{2k})\in QN(RG).
\]
On the other hand, since $1-g$ is central, we get $1-g \in J(RG)$. This teaches that $\Delta(RG) \subseteq J(RG)$ (remember that we replaced the whole group ring $RG$ by the ring $R\langle g\rangle$, as well as that $1-g^n=(1-g)(1+g + \cdots + g^{n-1}) \in J(RG)$). Activating \cite[Proposition 15(i)]{coon}, one infers that $G$ is a $q$-group with $q \in J(R)$. Since $3 \in J(R)$ and knowing that two distinct primes cannot be in $J(R)$, we must have $q=3$. Thus, there exists $m \in \mathbb{N}$ such that $g^{3^m}=1$. As $g \in G$ was chosen arbitrary, $G$ is really a 3-group, as we pursued.
\end{proof}

\vskip3.0pc

\end{document}